\documentclass{amsart}
\usepackage{amssymb}
\usepackage{amsfonts}
\usepackage{amsmath}
\usepackage[legalpaper,bookmarks=true,colorlinks=true,linkcolor=blue,citecolor=blue]{hyperref}
\usepackage{graphicx}%
\setcounter{MaxMatrixCols}{30}
\usepackage{fancyhdr}
\usepackage{color}
\usepackage[mathlines]{lineno}
\usepackage{lscape}
\usepackage{epsfig}
\usepackage{natbib}
\usepackage{geometry}
\usepackage{tgbonum}
\usepackage[]{listings}
%\begin{lstlisting}\end{lstlisting}
\usepackage{soul} 
%pratique \st{}

%autre techniques utiles : \backslashbox{Scenario}{Model}

%\geometry{textwidth=7cm}
\fontfamily{qcr}\selectfont

%\setcounter{MaxMatrixCols}{10}
%TCIDATA{OutputFilter=LATEX.DLL}
%TCIDATA{Version=5.50.0.2953}
%TCIDATA{<META NAME="SaveForMode" CONTENT="1">}
%TCIDATA{BibliographyScheme=Manual}
%TCIDATA{Created=Monday, September 24, 2012 11:23:00}
%TCIDATA{LastRevised=Sunday, July 24, 2016 08:20:49}
%TCIDATA{<META NAME="GraphicsSave" CONTENT="32">}
%TCIDATA{<META NAME="DocumentShell" CONTENT="Articles\SW\AMS Journal Article">}
%TCIDATA{CSTFile=amsartci.cst}

\newtheorem{theorem}{Theorem}
\theoremstyle{plain}

\numberwithin{equation}{section}

\newcommand{\Bin}{\bigskip \noindent}

\newcommand{\Ni}{\noindent}

\begin{document}
\Large
\title[Pseudo-Lindley Alpha Power transformed distribution]{The Pseudo-Lindley Alpha Power transformed distribution, mathematical characterizations and asymptotic properties }

\author{Modou Ngom $^{\dag}$}
\author{Moumouni Diallo $^{\dag\dag}$}
\author{ Adja Mbarka Fall $^{\dag\dag\dag}$}
\author{Gane Samb Lo $^{\dag\dag\dag \dag}$}

\begin{abstract} We introduce a new generalization of the Pseudo-Lindley distribution by applying alpha power transformation. The obtained distribution is referred as the Pseudo-Lindley alpha power transformed distribution (\textit{PL-APT}). Some tractable mathematical properties of the \textit{PL-APT} distribution as reliability, hazard rate, order statistics and entropies are provided. The maximum likelihood method is used to obtain the parameters' estimation of the \textit{PL-APT}  distribution. The asymptotic properties of the proposed distribution are discussed. Also, a simulation study is performed to compare the modeling capability  and flexibility of \textit{PL-APT} with Lindley and Pseudo-Lindley distributions. The \textit{PL-APT} provides a good fit as the Lindley and the Pseudo-Lindley distribution. The extremal domain of attraction of \textit{PL-APT} is found and its quantile and extremal quantile functions studied. Finally, the extremal value index is estimated by the double-indexed Hill's estimator (Ngom and Lo, 2016) and related asymptotic statistical tests are provided and characterized.\\
 
\noindent Modou Ngom $^{\dag}$\\
Work Affiliation : Ministry of High School (SENEGAL)\\
LERSTAD, Gaston Berger University, Saint-Louis, S\'en\'egal\\
Imhotep Mathematical Center\\
Email:ngom.modou1@ugb.edu.sn, ngomodoungom@gmail.com.\\

\noindent Moumouni Diallo $^{\dag\dag}$\\
Work Affiliation : Universit\'e des Sciences Sociale et de Gestion de Bamako(USSGB)\\
Imhotep Mathematical Center\\
Email: moudiallo1@gmail.com\\

\noindent Adja Mbarka Fall \noindent $^{\dag\dag\dag}$\\
Work Affiliation :  Universit\'e Iba Der Thiam de Thi\'es (UIDT)\\
Imhotep Mathematical Center\\
LERSTAD, Gaston Berger University, Saint-Louis, S\'en\'egal\\
Email:adjambarka.fall@univ-thies.sn\\

\noindent Gane Samb Lo \noindent $^{\dag\dag\dag \dag}$\\
LERSTAD, Gaston Berger University, Saint-Louis, S\'en\'egal (main affiliation)\\
LSTA, Pierre and Marie Curie University, Paris VI, France\\
AUST - African University of Sciences and Technology, Abuja, Nigeria\\
Imhotep Mathematical Center\\
Email:gane-samb.lo@edu.ugb.sn, gslo@aust.edu.ng, ganesamblo@ganesamblo.net\\
Permanent address : 1178 Evanston Dr NW T3P 0J9,Calgary, Alberta, Canada.\\

\noindent\textbf{Keywords.} alpha power Transformation of distributiion fuctions; Lindley's distribution; pseudo-Lindley distribution; extreme value theory; Doubly indexed Hill's estimator; reliability; hazard rate; maximum likelihood method; quantile function; extreme quantile function; asymptotic laws; Lambert function.\\
\noindent\textbf{ MSC2020-Mathematics Subject Classification System :} 60G70; 62G20; 62E15; 62F12; 60F05.

\end{abstract}
\maketitle

\section{Introduction} \label{intro}

\Bin In the last decades, the Lindley distribution (see \cite{lindley1958, lindley1965}), with parameter $\theta > 0 $, has been a center of interests of many research activities. The family of Lindley distribution of one parameter $\theta >0$ has the following cumulative distribution function (\textit{cdf}) 

\begin{equation} \label{Cdf_lindl}
F_{L}(x)=\left(1- \left( 1 + \frac{\theta x}{1+\theta} \right) \exp(-\theta x) \right)1_{(x\geq 0)}.
\end{equation} 

\Bin The corresponding probability distribution function (\textit{pdf}) of (\ref {Cdf_lindl}) is given by

\begin{equation} \label{Pdf_lindl}
f_{L}(x)=\left(\frac{\theta ^{2}}{1+\theta} \left( 1 + x \right) \exp(-\theta x) \right)1_{(x\geq 0)}. 
\end{equation} 

\Bin Lindley's statistical distribution, which has been proposed as an alternative model to fit data with non-monotone hazard rate, and its different generalizations  have attracted a great attention from researchers. This is justified by the importance of such distribution in many areas as reliability for example. Let us cite a few number of important examples. A new bounded domain probability density  feature in view of a generalized Lindley distribution is considered in  \cite{Ghitanyetal}. The Lindley distribution has been used for modeling completing risks in lifetime data in \cite{MazucheliAchcar2011}. A statistical inference on the parameter in its progressive Type-II censoring scheme is provided in \cite {KrishnaKumar2011}. \cite{Gomezetal} use the Log-Lindley distribution in the application of strength systems reliability in the field of insurance and inventory management. A comparison study of the adequacy of exponential and Lindley distributions on modeling of lifetime data is studied by \cite{Shankeretal2015}. A study is carried out by \cite{Hafezetal}, using the accelerated life tests under censored sample and the importance of the distribution is introduced applying an experimental application. 

\Bin  However, the Lindley distribution has an increasing failure rate and this makes it not flexible in lifetime data  modeling. Because of the above mentioned importance, a significant number of generalizations has been introduced to improve its ability to analyze various types of lifetime data with a high degree of skewness and kurtosis. The later generalizations continue themselves to be extended.  The idea is to correct this flaw by increasing the number of parameters. Indeed, most of the generalizations introduced other parameters in hope of capturing the complexity data in  lifetime data. One of these direct generalizations is developed in \cite{zeghdoudi2016} and is named as the Pseudo-Lindley distribution with two parameters $\theta>0$ and $\beta>1$ . The \textit{cdf} and \textit{pdf} of  the Pseudo-Lindley distribution  are defined respectively by 

\begin{equation}
F_{PL}(x)= \left( 1-\beta ^{-1}\left( \beta +\theta x\right) \exp \left( -\theta x\right) \right)1_{(x\geq 0)}, \label{CdfPseudolind}
\end{equation}

\begin{equation}
f_{PL}(x)= \left(\frac{\theta \left( \beta -1+\theta x\right) \exp \left( -\theta
x\right) }{\beta }\right) 1_{(x\geq 0)}.  \label{PdfPseudolind}
\end{equation}

\Bin Further statistical studies on the Pseudo-Lindley distribution are available in \cite{gslo2020, gslo2019} which focused on the asymptotic theory of moments estimators, the extreme values characterization and estimations with among other topics. Also, a discrete version of the Pseudo-Lindley distribution is proposed by \cite{Irshadetal} with a stress on  the mathematical properties.

\Bin On another side, \cite{mahdavi2017} introduced a powerful method of creating new statistical distributions named  the alpha power transformed (\textit{APT}). For any \textit{cdf} $F$ with respective lower and upper endpoints

$$
lep(F)=\inf \{x \in \mathbb{R}, \ F(x)>0\} \ \ , \ \ uep(F)=\sup \{x \in \mathbb{R}, \ F(x)>0\},
$$

\Bin its \textit{APT} $G_{\alpha}$  is defined, for $\alpha \in ] 0, +\infty [\setminus \{1\}$ as 

\begin{equation}\label{aptgcdf}
G_{\alpha}(x)=\frac{1-\alpha ^{F\left( x\right) }}{1-\alpha}, \ x \in [lep(F), \ uep(F)].
\end{equation}

\Bin  We may seet that $G_{\alpha}$ is a \textit{cdf} by considering the two cases $0<\alpha<1$ and $\alpha>1$. In fact, by using the non-decreasingness of $F$, we have, for $\alpha>1$, that $\alpha^{F(x)}=\exp(F(x) \log \alpha)$ is non-decreasing and hence $1-\alpha^{F(x)}$ non-increasing. Since the denominator $1-\alpha$ is negative, we get that $G_{\alpha}$ is non-decreasing. A similar method shows that $G_{\alpha}$ is still non-decreasing for $0<\alpha<1$. Besides, $G_{\alpha}$ is right-continuous and $\lim_{x\rightarrow lep(F)} G_{\alpha} (x) = 0 $ and $\lim_{x\rightarrow uep(F)} G_{\alpha}(x) = 1 $.

\Bin Furthermore, we have $lep(F) = lep(G_{\alpha})$ and $ uep(F) = uep(G_{\alpha})$. The definition may be extended to  $\alpha=1 $ by taking $ G_{1}=F$.

\Bin Whenever $F$ has a probability density function ($ pdf $) $f$,the \textit{APT} $G_\alpha$, has the $ pdf $ defined as follows

\begin{equation}\label{aptgcdf}
g_{\alpha}(x)=\frac{\log \left( \alpha \right) }{\alpha -1}f\left( x\right) \alpha^{F(x)} , \ \ x \in [lep(G_{\alpha}), \ uep(G_{\alpha})],
\end{equation}

\Ni for $\alpha \in ]0, +\infty[\setminus \{1\}$ and $g_{1}=f$.

\Bin One of the reasons of appealing to the \textit{APT} is its ability to make distribution more flexible to fit correctly and adequately some lifetime data. For example the following uses of the \textit{APT} have been made:  \cite{Aldahlan} for log-logistic distributions, \cite{ZeinEldinetal} for the Inverse Lomax distribution, \cite{eghwerido} for the Teissier distribution and \cite{Ijazetal} for exponential distribution, to cite a few.\\

\Ni The alpha power transformed quasi Lindley distribution (\textit{APTQL}) has been studied in \cite{unyimeette} using the quasi Lindley distribution introduced by \cite{ShankerMishra2013}. The \textit{cdf} of the \textit{APTQL} is defined for $\beta >-1$ and $\theta >0$ by 

\begin{equation}\label{aptqldcdf0}
F_{\alpha}(x)= \frac{1}{1-\alpha}\left( 1-\alpha ^{1-\left(\frac{\beta +1+\theta x}{\beta +1}\right) \exp \left(
-\theta x\right) }\right), \ \ x \geq 0 ,
\end{equation}

\Bin if $\alpha \in ] 0, +\infty [\setminus \{1\}$ and 

\begin{equation}\label{aptqldcdf1}
F_{1}(x)= 1-\left(\frac{\beta +1+\theta x}{\beta +1}\right) \exp \left(
-\theta x\right),\ \ x \geq 0. 
\end{equation}

\Bin The aim of this paper is contributing  to the current trend by precisely applying the \textit{APT} to the Pseudo-Lindley distribution (see \cite{zeghdoudi2016} ), already cited above. We will show that the new class of distributions, called the Pseudo-Lindley Alpha Power Transformed distribution (\textit{PL-APT}), can be used to  improve the flexibility of continuous real lifetime data over the Pseudo-Lindley distribution. We study the statistical properties which will be summarized in a reserved paragraph \pageref {mathproperty}.

\Bin The rest of the paper is organized as follows. In Section \ref {model}, we present the model of the \textit{PL-APT} such as the \textit{cfd} and the \textit{pdf}. We precise the type of distribution when the parameters take some particular values.Section \ref {mathproperty} is devoted to some statistical properties of the \textit{PL-APT} related reliability and hazard rate functions, order statistic and entropies and likelihood method of parameters estimations. The asymptotic properties of the new family are presented in Section \ref{asymproperty}. Its quantile function and the related extreme expansions are studied in Subsection \ref{quantextremquant}  and the extremal behaviors in Subsection \ref{extremes}, where the doublei-indexed \cite{ngomlo2016}'s statistic is used to estimate the extreme index value and its asymptotic law is expanded. The proof of the expressions of its quantile function and its expansions are stated in an appendix from page \pageref{pageAppendixA1}. Section \ref{conclud} concludes the paper and gives perspectives.   

\section{The model} \label{model}
\Bin Let $ X $ be a random variable following a \textit{PL-APT} with parameters $\alpha$, $\beta$ and $\theta$ denoted by $X\hookrightarrow$ \textit{PL-APT} $(\alpha,\beta,\theta)$. According to equations (\ref{CdfPseudolind}) and (\ref{aptgcdf}) the \textit{cdf} of the \textit{PL-APT} is defined as follows :
\Bin if $\alpha \in ] 0, +\infty [\setminus \{1\}$,

\begin{equation} 
G_{\alpha}(x)=\frac{1}{1-\alpha} \left(1-\alpha ^{1-\beta ^{-1}\left( \beta +\theta x\right) \exp \left(
-\theta x\right) } \right) 1_{(x\geq 0)}, \label{CdfAlpaPseudolin2}
\end{equation}

\Bin and  

\begin{equation} 
G_{1}(x)= \left(1-\beta ^{-1}\left( \beta +\theta x\right) \exp \left( -\theta x\right) \right) 1_{(x\geq 0)}.\label{CdfAlpaPseudolin1}
\end{equation}

\Bin From equations (\ref{CdfAlpaPseudolin2}), (\ref{CdfAlpaPseudolin1}) and (\ref{aptgcdf}), we find the \textit{pdf} of the \textit{PL-APT} defined by 

\begin{equation} %{PL-APTpdf}
g_{\alpha}(x)= \left(\frac{\theta \log \left( \alpha \right) \left( \beta -1+\theta x\right) \exp
\left( -\theta x\right) }{\beta \left( \alpha -1\right) }\alpha ^{1-\beta
^{-1}\left( \beta +\theta x\right) \exp \left( -\theta x\right) } \right) 1_{(x\geq 0)}, \label{PdfAlpaPseudolind2}
\end{equation}

\Bin if  $\alpha \in ] 0, +\infty [\setminus \{1\}$, and 

\begin{equation} %{PL-APTpdf}
g_{1}(x)= \left(\frac{\theta \left( \beta -1+\theta x\right) \exp \left( -\theta
x\right) }{\beta }\right) 1_{(x\geq 0)}. \label{PdfAlpaPseudolind1}
\end{equation}

\begin{enumerate}
	\item  If the parameter $\alpha = 1 $ then the \textit{PL-APT} distribution corresponds to the Pseudo-Lindley distribution
developed by \cite{gslo2020}. The figure \ref{applalpheg1}
shows the graphs of the \textit{cdf} and the \textit{pdf} with several values of the
parameters $ \alpha $, $ \beta$ and $ \theta $.

%%%%%%%%%%%%%%%%%%%%%
\begin{figure}[htbp]
\begin{minipage}[c]{.46\linewidth}
\includegraphics[height=8cm, width=8cm]{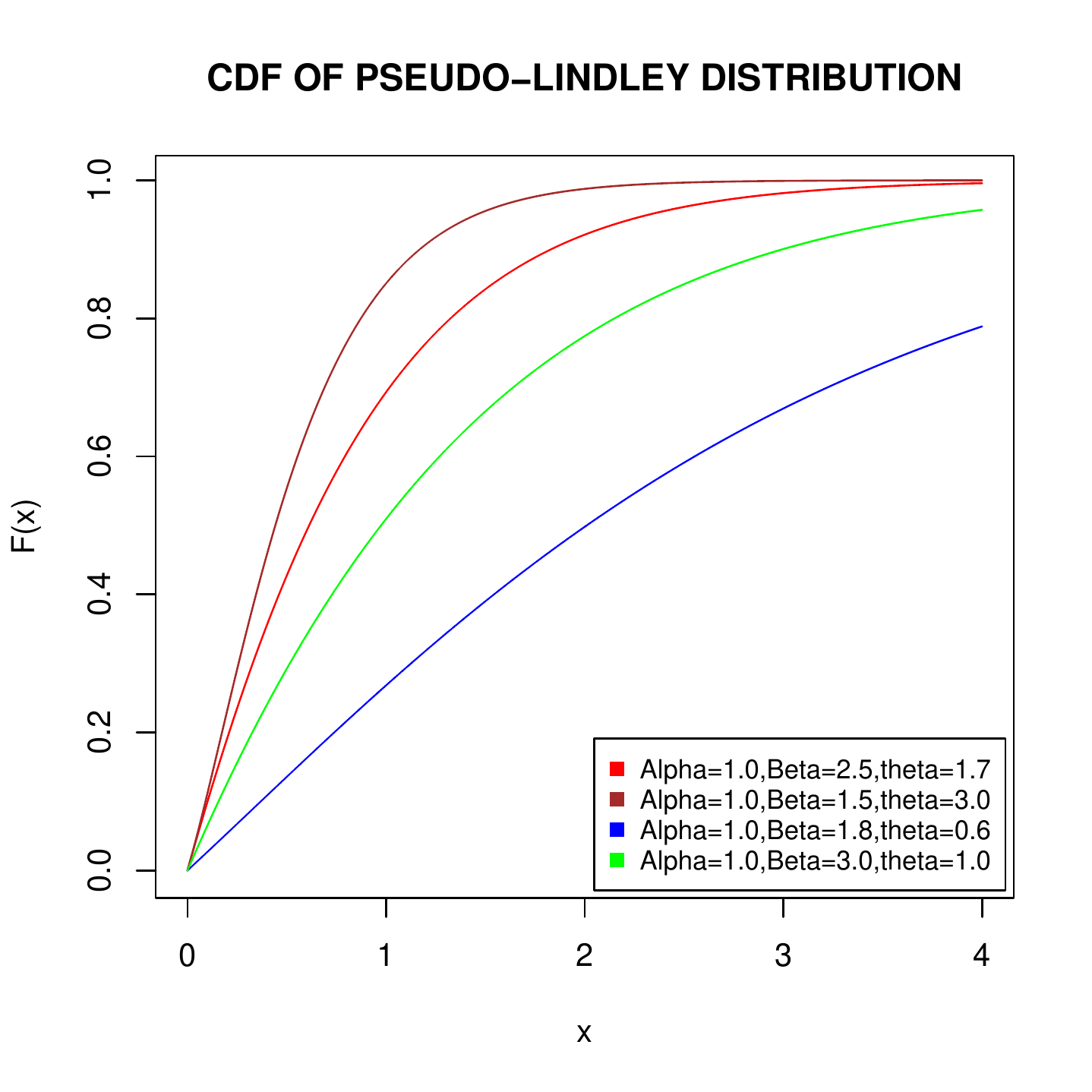}
\end{minipage} \hfill 
\begin{minipage}[c]{.46\linewidth}
\includegraphics[height=8cm, width=8cm]{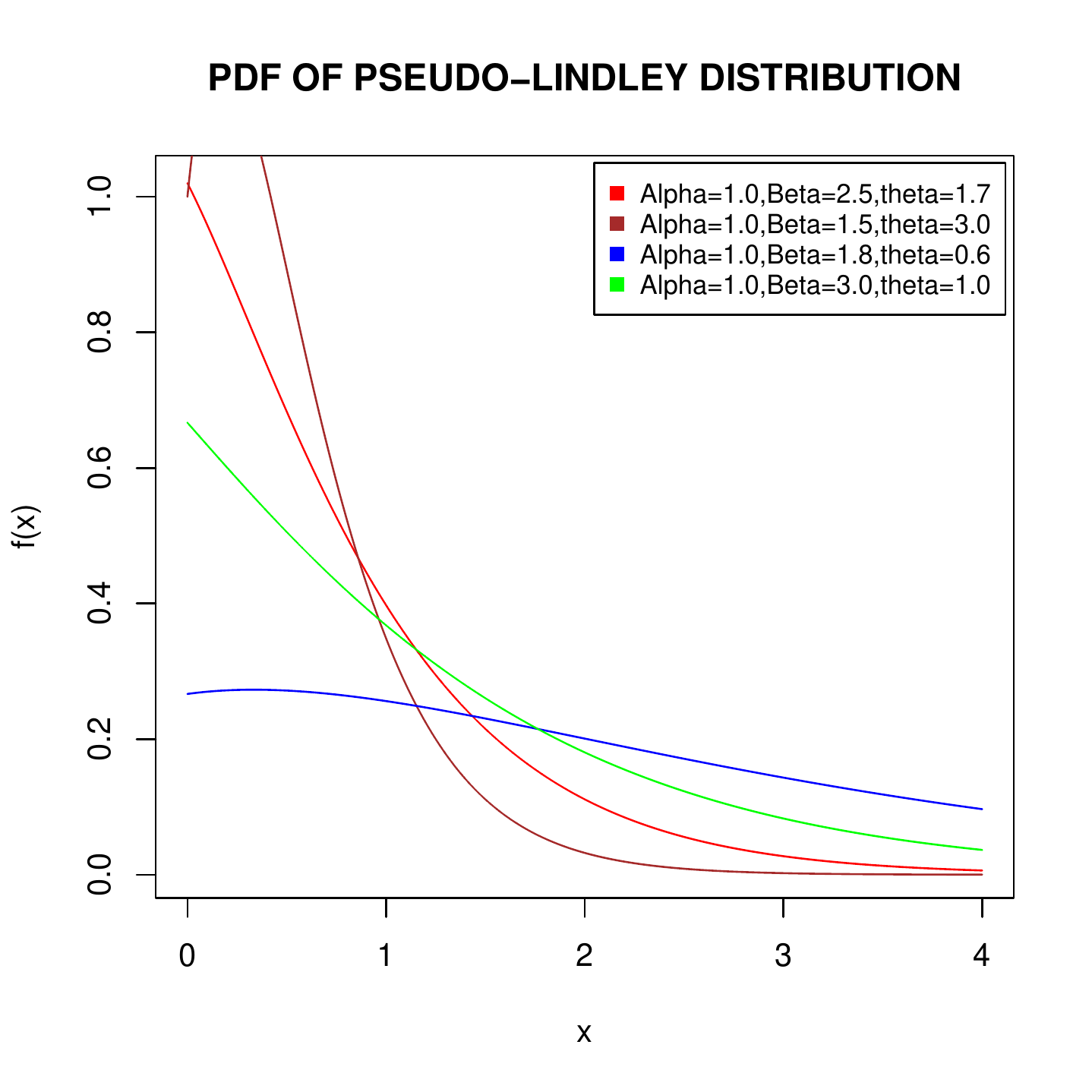}
\end{minipage}
\caption{Graphs of the \textit{cdf}  (left) and \textit{pdf}  (right) for the Pseudo-Lindley distribution with several values of parameters $\alpha$, $\beta$ and $\theta$.}
\label{applalpheg1}
\end{figure}

%%%%%%%%%%%%%%%%%%%%%%%

\item  If the parameter $\alpha = 1 $, $\beta=1+\theta$ and $ \theta > 0 $ then the \textit{PL-APT} distribution corresponds to the Lindley distribution developed by Lindley in the two papers  \cite{lindley1958, lindley1965}. The figure \ref{applalpheg1lind}
shows the graphs of the \textit{cdf}  and the \textit{pdf} with several values of the
parameters $ \alpha $, $ \beta$ and $ \theta $.

\begin{figure}[htbp]
\begin{minipage}[c]{.46\linewidth}
\includegraphics[height=8cm, width=8cm]{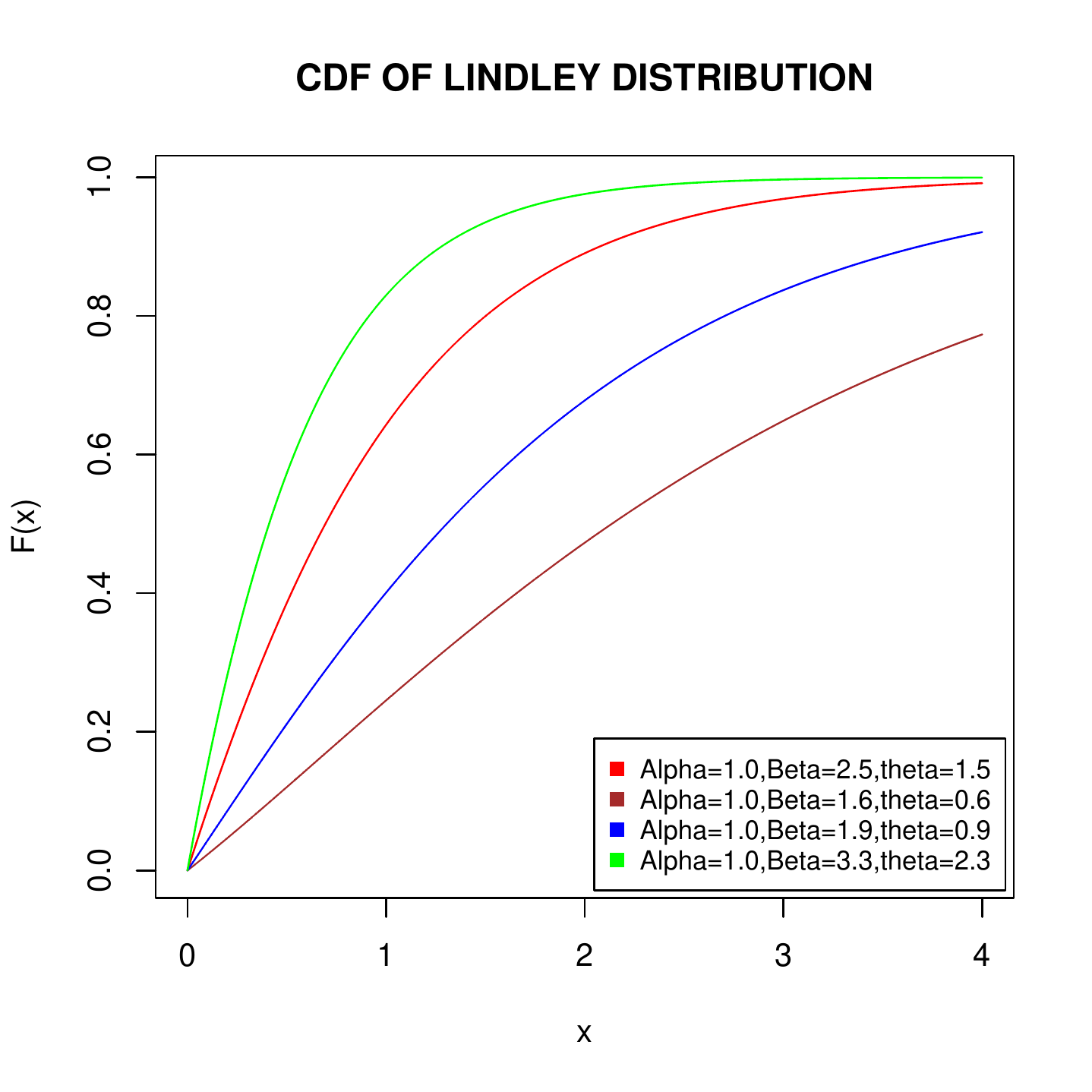}
\end{minipage} \hfill 
\begin{minipage}[c]{.46\linewidth}
\includegraphics[height=8cm, width=8cm]{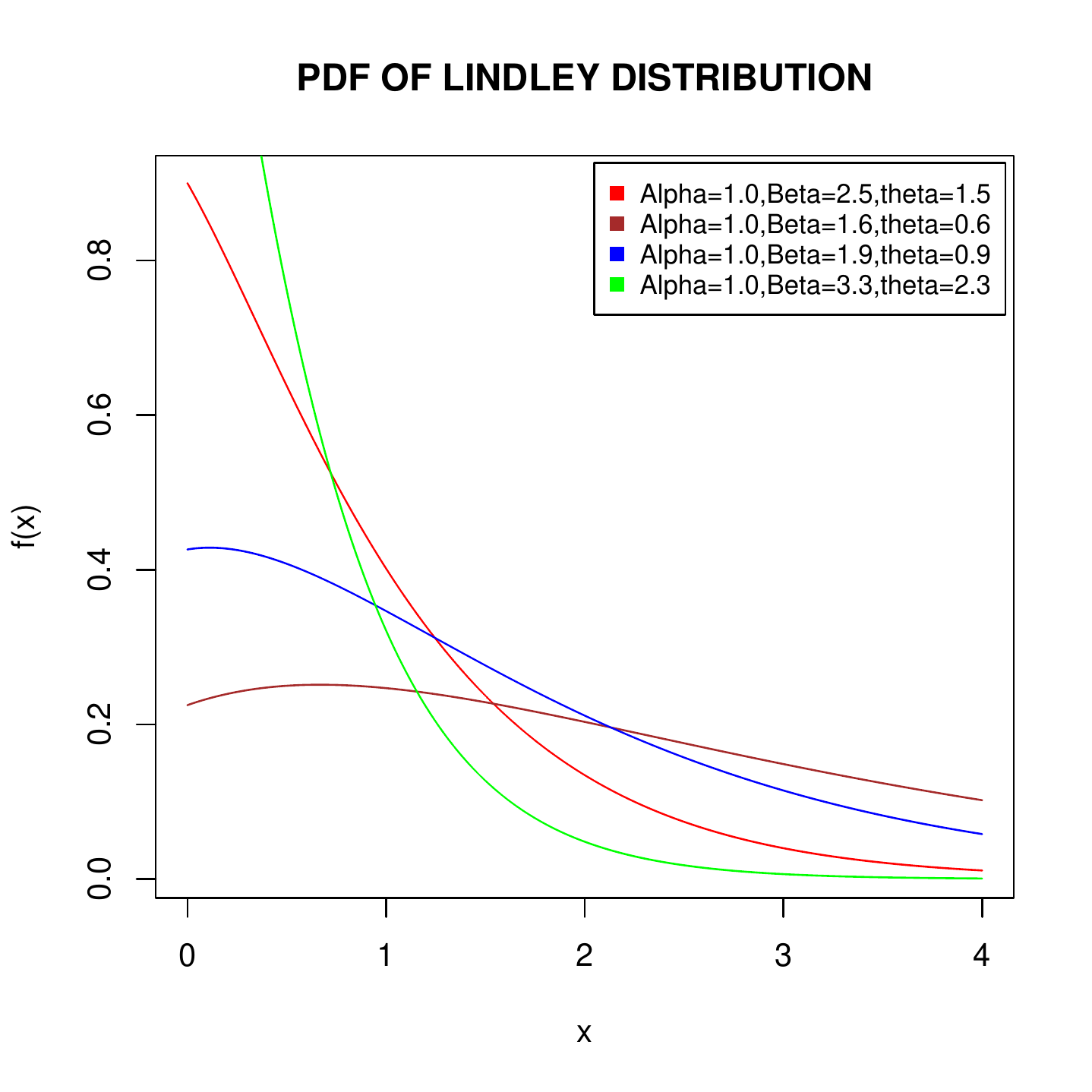}
\end{minipage}
\caption{Graphs of the \textit{cdf}  (left) and \textit{pdf}  (right) for the Lindley distribution with several values of parameters $\alpha$, $\beta$ and $\theta$.}
\label{applalpheg1lind}
\end{figure}

%%%%%%%%%%%%%%%%%%%%%%%

\item  If the parameter $\alpha \in ] 0, +\infty [\setminus \{1\}$ then the \textit{cdf}  and the 
\textit{pdf}  of the \textit{PL-APT} distributions are represented by the graphs  of figure \ref%
{applalphdif1} below for several values of the parameters $ \alpha $, $ \beta$ and $ \theta $.

%%%%%%%%%%%%%%%%%%%%%%%
\begin{figure}[htbp]
\begin{minipage}[c]{.46\linewidth}
\includegraphics[height=8cm, width=8cm]{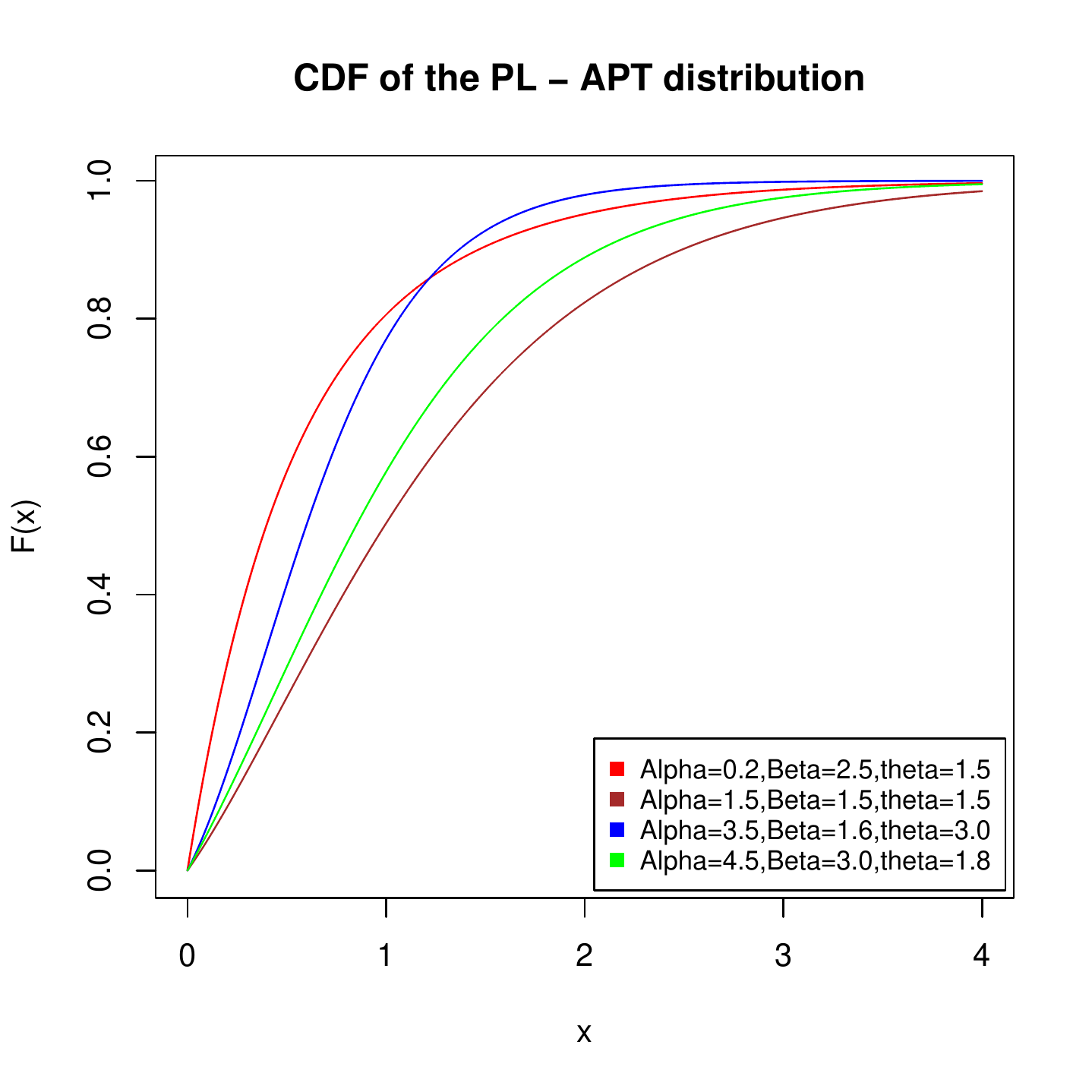}
\end{minipage} \hfill 
\begin{minipage}[c]{.46\linewidth}
\includegraphics[height=8cm, width=8cm]{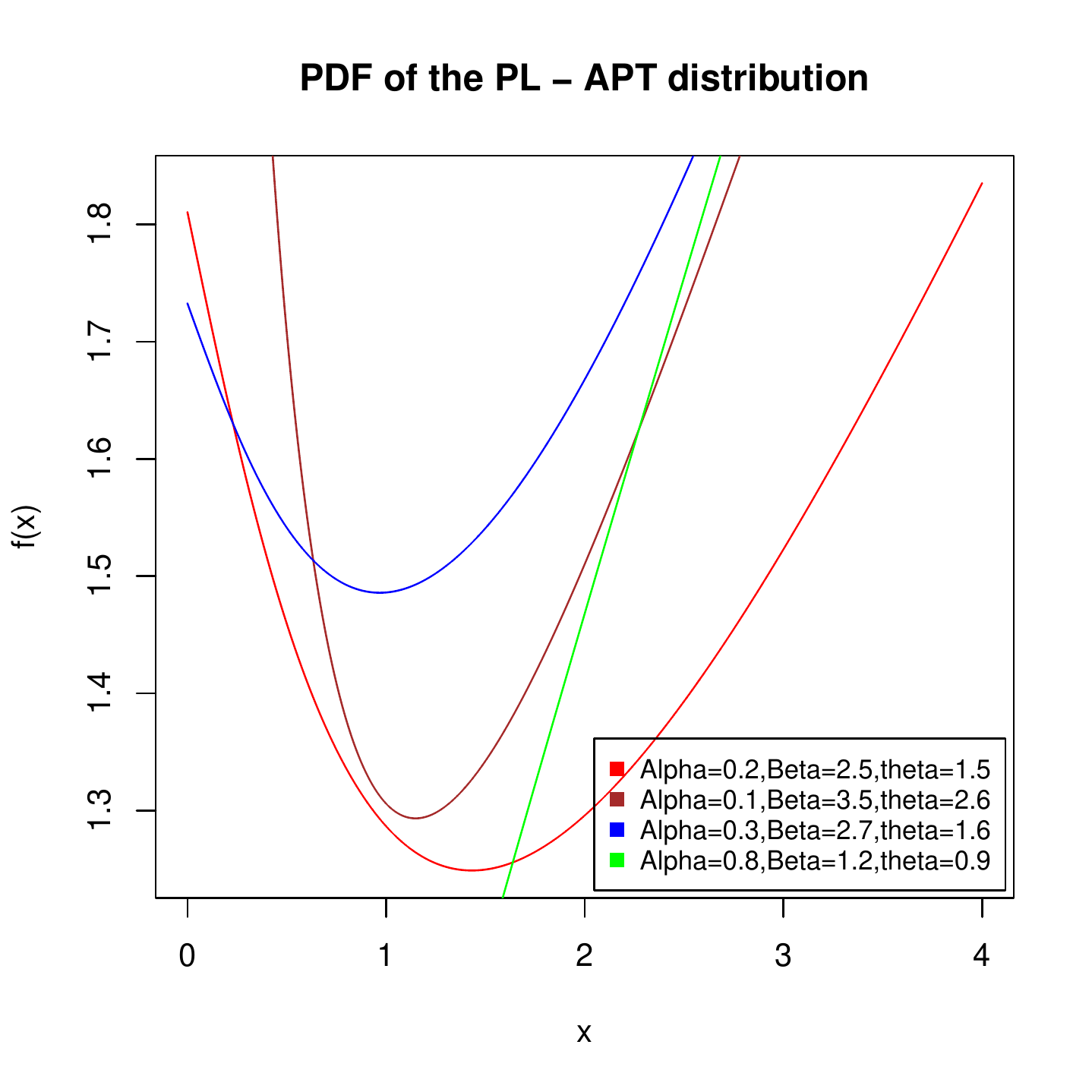}
\end{minipage}
\caption{Graphs of the \textit{cdf} (left) and \textit{pdf} (right) for the \textit{PL-APT} distribution with several values of parameters $\alpha$, $\beta$ and $\theta$.}
\label{applalphdif1}
\end{figure}

%%%%%%%%%%%%%%%%%%%%%%%%
\end{enumerate}

\section{Mathematical properties} \label{mathproperty}
\Bin Some basic statistical properties of the \textit{PL-APT} distribution with parameters $\alpha$, $\theta >0$
and $\beta >1$ are derived and established in this section.
\subsection{Reliability}
\Bin The reliability function of the \textit{PL-APT} distribution is expressed as follows
\Bin for $\alpha \in ] 0, +\infty [\setminus \{1\}$, 

\begin{eqnarray*} \label{Reliability2}
R_{\alpha}(x) &=& 1-G_{\alpha}\left( x\right) \\
     &=& 1-\frac{1-\alpha ^{1-\beta ^{-1}\left( \beta +\theta x\right) \exp \left(
-\theta x\right) }}{1-\alpha }\\
    &=& \frac{\alpha}{\alpha-1} \left( 1-\alpha^{-\beta^{-1}(\beta+\theta x) \exp(-\theta x)}\right),
\end{eqnarray*}%

\Bin $ x \in [lep(F), \ uep(F)]$ and for $x \in [lep(F_{1}), \ uep(F_{1})] $, 

\begin{eqnarray*} \label{Reliability1}
R_{1}(x) &=&1-G_{1}\left( x\right)\\
    &=& \beta ^{-1}\left( \beta +\theta x\right) \exp \left( -\theta x\right).
\end{eqnarray*}%

\subsection{Hazard rate function}

\Bin The mathematical formula for hazard function which is
otherwise called failure rate is defined as follows:

\Bin if $\alpha \in ] 0, +\infty [\setminus \{1\}$, 

\begin{equation*}
h_{\alpha}(x)= \frac{g_{\alpha}\left( x\right) }{1-G_{\alpha}\left( x\right)}, \ x \in [lep(F), \ uep(F)]
\end{equation*}

\Bin and  

\begin{equation*}
h_{1}(x)= \frac{f\left( x\right) }{1-F\left( x\right)}, \ x \in [lep(F), \ uep(F)].
\end{equation*}

\Bin So, we obtain the hazard function of the \textit{PL-APT} distribution with parameters $\alpha$, $\theta >0$ and $%
\beta >1$ as follows:\\

\Bin if $\alpha \in ] 0, +\infty [\setminus \{1\}$, 

\begin{equation} 
h_{\alpha}(x)= \frac{\beta ^{-1}\theta \log \left( \alpha \right) \left( \beta -1+\theta
x\right) \exp \left( -\theta x\right) \alpha ^{1-\beta ^{-1}\left( \beta
+\theta x\right) \exp \left( -\theta x\right) }}{\alpha -\alpha ^{1-\beta
^{-1}\left( \beta +\theta x\right) \exp \left( -\theta x\right) }}, \label{HazardRate2}
\end{equation}

\Bin $ x \in [lep(F), \ uep(F)]$ and for $x \in [lep(F_{1}), \ uep(F_{1})]$,

\begin{equation} 
h_{1}(x)=\frac{\theta \left( \beta -1+\theta x\right) }{\left( \beta +\theta x\right) 
}. \label{HazardRate1}
\end{equation}

\begin{figure}[htbp]
\begin{center}
\includegraphics[height=12cm, width=12cm]{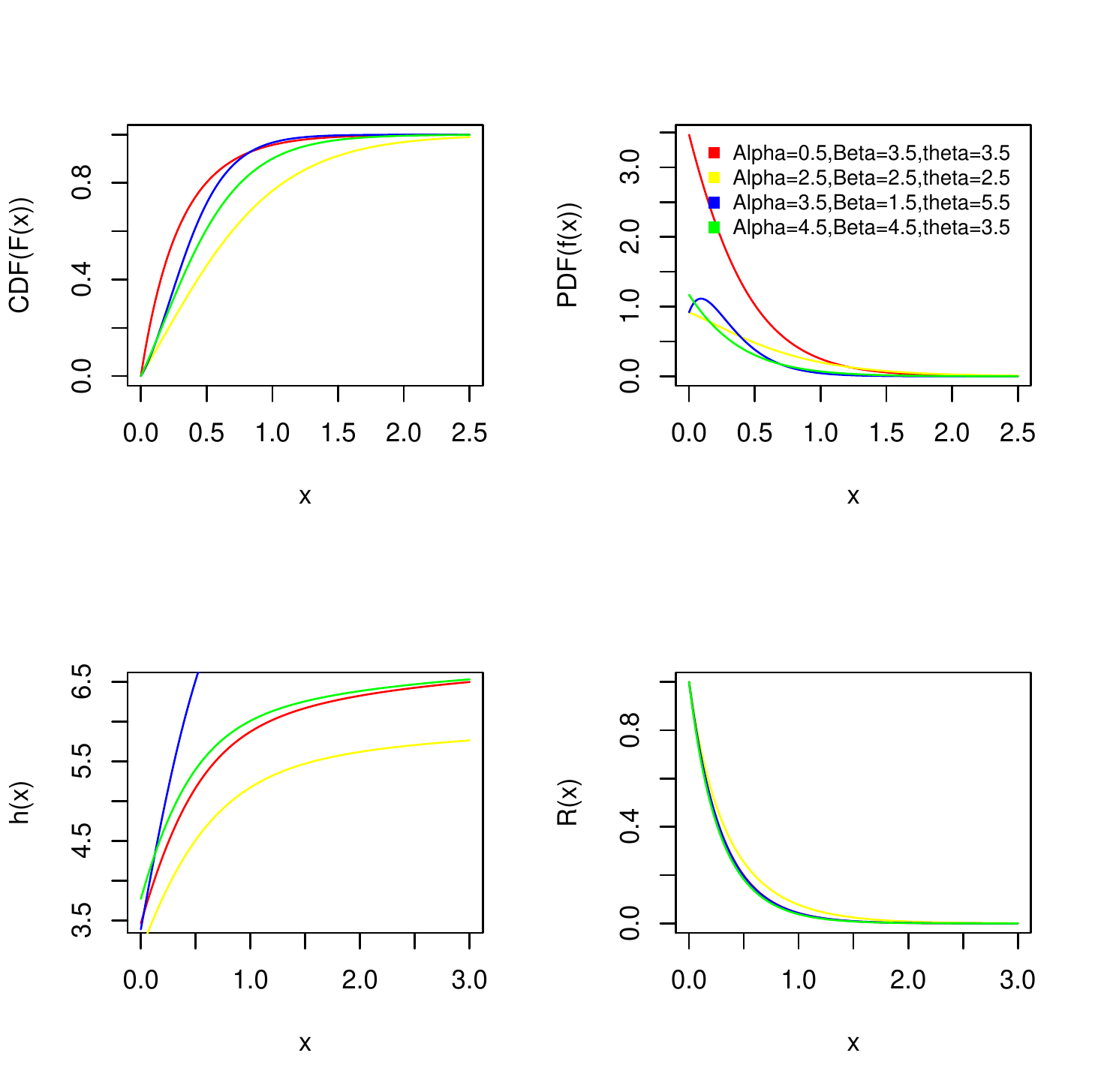}
\end{center}
\par
\caption{Graphs of \textit{cdf} (top left), \textit{pdf} (top right), hazard rate (bottom left) and Reliability (bottom right) of the \textit{PL-APT} with the same several values of parameters $\alpha$, $\beta$ and $\theta$.}
\label{figappld2}
\end{figure}

\subsection{Order statistics and Entropies}

\Bin Let $ X_{1},X_{2},...,X_{n}$ be a sample of random variables of size $ n $ 
following  a \textit{PL-APT} distribution with parameters $\alpha$, $\theta >0$
and $\beta >1 $ and  $ X_{\left( 1\right) },X_{\left( 2\right) },...,X_{\left(
n\right) } $ the order statistics of the processes. Then, the \textit{pdf} of the $%
k^{th}$ order statistic $X_{\left( k\right) }$ denoted by $g_{k}\left(
x\right) $ is defined as follows :

\begin{description}
\item[(1)] If $ \alpha \in ] 0, +\infty [\setminus \{1\} $ then 
\begin{equation}
g_{k}\left( x\right) =\left\{ 
\begin{array}{c}
\frac{n\left( n-1\right) !}{\left( n-k\right) !\left( k-1\right) !}\left( 
\frac{1-\alpha ^{1-\beta ^{-1}\left( \beta +\theta x\right) \exp \left(
-\theta x\right) }}{1-\alpha }\right) ^{k-1} \\ 
\\ 
\times \left( 1-\frac{1-\alpha ^{1-\beta ^{-1}\left( \beta +\theta x\right)
\exp \left( -\theta x\right) }}{1-\alpha }\right) ^{n-k} \\ 
\\ 
\times \frac{\theta \log \left( \alpha \right) \left( \beta -1+\theta
x\right) \exp \left( -\theta x\right) }{\beta \left( \alpha -1\right) }%
\alpha ^{1-\beta ^{-1}\left( \beta +\theta x\right) \exp \left( -\theta
x\right) } .%
\end{array}%
\right.  \label{denOrderstat1}
\end{equation}

\item[(2)] If $\alpha =1$ then 
\begin{equation}
g_{k}\left( x\right) =\left\{ 
\begin{array}{c}
\frac{n\left( n-1\right) !}{\left( n-k\right) !\left( k-1\right) !}\left(
1-\beta ^{-1}\left( \beta +\theta x\right) \exp \left( \theta x\right)
\right) ^{k-1} \\ 
\\ 
\times \left[ \beta ^{-1}\left( \beta +\theta x\right) \exp \left( \theta
x\right) \right] ^{n-k} \\ 
\\ 
\times \frac{\theta \left( \beta -1+\theta x\right) \exp \left( \theta
x\right) }{\beta }.%
\end{array}%
\right.  \label{denOrderstat22}
\end{equation}
\end{description}

\Bin We obtain the minimum and the maximum order statistics
respectively when $k=1$ and $k=n.$ If the size $n$ of the sample is an odd
number then there exists an integer number $m$ such that $n=2m+1$ and $k=m+1$
, and the distribution of the median is defined by :

\begin{description}
\item[(1)] If $\alpha \in ] 0, +\infty [\setminus \{1\}$ then 
\begin{equation*}
g_{m+1}\left( x\right) =\left\{ 
\begin{array}{c}
\frac{2m\left( 2m+1\right) !}{\left( m!\right) ^{2}}\left( \frac{1-\alpha
^{1-\beta ^{-1}\left( \beta +\theta x\right) \exp \left( -\theta x\right) }}{%
1-\alpha }\right) ^{m} \\ 
\\ 
\times \left( 1-\frac{1-\alpha ^{1-\beta ^{-1}\left( \beta +\theta x\right)
\exp \left( -\theta x\right) }}{1-\alpha }\right) ^{m} \\ 
\\ 
\times \frac{\theta \log \left( \alpha \right) \left( \beta -1+\theta
x\right) \exp \left( -\theta x\right) }{\beta \left( \alpha -1\right) }%
\alpha ^{1-\beta ^{-1}\left( \beta +\theta x\right) \exp \left( -\theta
x\right) } .%
\end{array}%
\right.
\end{equation*}

\item[(2)] If $\alpha =1$ then

\begin{equation*}
g_{m+1}\left( x\right) =\left\{ 
\begin{array}{c}
\frac{2m\left( 2m+1\right) !}{\left( m!\right) ^{2}}\left( 1-\beta
^{-1}\left( \beta +\theta x\right) \exp \left( -\theta x\right) \right) ^{m}
\\ 
\\ 
\times \left[ \beta ^{-1}\left( \beta +\theta x\right) \exp \left( -\theta
x\right) \right] ^{m} \\ 
\\ 
\times \frac{\theta \left( \beta -1+\theta x\right) \exp \left( -\theta
x\right) }{\beta }.%
\end{array}%
\right.
\end{equation*}
\end{description}

\subsection{Parameters estimation}

\Bin We estimate the parameters of the \textit{PL-APT} distribution by the maximum likelihood method. Let $X_{1}, X_{2}, . . . , X_{n} $ be a random sample from the \textit{PL-APT} distribution. Then, the log-likelihood function of the \textit{PL-APT} distribution $\emph{l}_{n}=\log \emph{l}\left( \beta ;\theta ;x_{1},x_{2},...,x_{n}\right) $ is given by

\begin{eqnarray}
\emph{l}_{n} &=&\sum_{i=1}^{n}\log \left( \frac{\theta \log \left( \alpha
\right) \left( \beta -1+\theta x_{i}\right) \exp \left( -\theta x_{i}\right) 
}{\beta \left( \alpha -1\right) }\alpha ^{1-\beta ^{-1}\left( \beta +\theta
x_{i}\right) e^{ \left( -\theta x_{i}\right)} }\right)  \label{loglikelihood}
\\
&=&n\left( \log \theta +\log \log \alpha -\log \beta -\log \left( \alpha
-1\right) \right) +\sum_{i=1}^{n}\log \left( \beta -1+\theta x_{i}\right) 
\notag \\
&&-\theta \sum_{i=1}^{n}x_{i}+\log \alpha \sum_{i=1}^{n}\left( 1-\beta
^{-1}\left( \beta +\theta x_{i}\right) \exp \left( -\theta x_{i}\right)
\right) .  \notag
\end{eqnarray}

\Bin The estimate values of $ \theta $ and $ \beta $, points in which the log-likelihood function attains its maximum, are
the solutions of likelihood equations (\ref{estimtetha}) and (\ref{estimbeta}) obtained by using the partial derivative for each parameter on
equation (\ref{loglikelihood}) and equating to zero. We have

\begin{eqnarray}
\frac{\partial \emph{l}_{n}}{\partial \theta } &=&\frac{n}{\theta }%
-\sum_{i=1}^{n}x_{i}+\sum_{i=1}^{n}\frac{x_{i}}{\beta -1+\theta x_{i}}%
-\left( 1+\frac{1}{\beta }\right) \log \alpha \sum_{i=1}^{n}x_{i} e^{-\theta x_{i}}  \label{estimtetha}
\end{eqnarray}
\begin{eqnarray*}
+\log \alpha \sum_{i=1}^{n}\left( \frac{\beta +\theta x_{i}}{\beta }%
\right) x_{i}e^{-\theta x_{i}} = 0,
\end{eqnarray*}

\Bin and

\begin{equation}
\frac{\partial \emph{l}_{n}}{\partial \beta }=-n\beta ^{-1}+\sum_{i=1}^{n}\frac{1}{%
\beta -1+\theta x_{i}}+\log \alpha \sum_{i=1}^{n}\beta ^{-2}\theta x_{i}\exp
\left( -\theta x_{i}\right) =0.  \label{estimbeta}
\end{equation}

\Bin The Likelihood equations (\ref{estimtetha}) and (\ref{estimbeta}) can not be solved explicitly since they are nonlinear functions of parameters $\theta$ and $\beta $. Therefore, iterative methods such as Newton-Raphson algorithm (NR) should be utilized to obtain the solution of these equations simultaneously.
 
\section{Asymptotic properties} \label{asymproperty}

\Bin In this part, we study the asymptotic properties specially the quantile function, the extremal quantile function and the extremal index estimation of the \textit{PL-APT} distribution.

\subsection{Quantile and extremal quantile functions} \label{quantextremquant}
\subsubsection{Quantile function}
\noindent The quantile function for the \textit{PL-APT} distribution is obtained by
solving for $x$ the non-linear equation $G_{\alpha}\left( x\right) =u$. The
quantile function of the \textit{PL-APT} distribution, for any
value of $\alpha >0$, as follows :

\noindent If $\alpha =1 $  then  

\begin{equation}\label{quantPL-APT1}
x\left( u\right) = -\frac{\beta }{\theta }-\frac{1}{\theta }W_{-1}\left[ \beta \left(
u-1\right) \exp \left( -\beta \right) \right],
\end{equation}

\noindent and if $\alpha \in ] 0, +\infty [\setminus \{1\} $ then

\begin{equation}\label{quantPL-APT} 
x\left( u\right) = -\frac{\beta }{\theta }-\frac{1}{\theta }W_{-1}\text{ }\left\{ -\beta \exp
\left( -\beta \right) +\frac{\beta \exp \left( -\beta \right) }{\log \alpha }
\log \left[ 1-\left( 1-\alpha \right) u\right] \right\}.
\end{equation}

\Bin  The proof of equation (\ref{quantPL-APT}) is presented in Appendix (A2), page \pageref{pageAppendixA2}.

\Bin  Obviously, we defined the first quartile, the median and the third quartile of the \textit{PL-APT} distribution by $Q1$, $Q2$ and $Q3$ respectively. For several values of the parameters, the Tables \ref{tab01} and \ref{tab02} comprise values of the quantile specially the first quartile, the median and the third quartile of the \textit{PL-APT} distribution.

\Bin For  $\alpha \in ] 0, +\infty [\setminus \{1\}$, we have the table \ref{tab01} and for $\alpha = 1$  we have the table  \ref{tab02} .

\begin{table}[h]
\centering
\begin{tabular}{c cc cc cc cc cc}
\hline \hline  
$ \theta$ && $\alpha$ &&$ \beta$ &&    $Q1$       &&  $Q2$    && $Q3$ \\ \hline %\hline    
          &&           &&         &&              &&          &&   \\
          && 0.5    &&1.1    && 0.7026004    && 1.2219140   && 1.7944430 \\    
    0.6   && 1.5    &&1.5    && 0.2253118    && 0.4127261   && 0.5760581 \\ 
          && 2      &&2.5    && 0.4515757    && 0.8640310   && 1.2577340  \\ 
				 &&           &&         &&              &&          &&   \\ \hline
        &&           &&         &&              &&          &&   \\
       && 0.5    &&1.1     && 0.28104020     && 0.4887654  && 0.7177772   \\    
   1.5 && 1.5    &&1.5     && 0.09012474    && 0.1650904  && 0.2304232   \\ 
       && 2      &&2.5     && 0.18063030    && 0.3456124  && 0.5030937    \\ 
			 &&           &&         &&              &&          &&   \\ \hline
       &&           &&         &&              &&          &&   \\

       && 0.5    &&1.1    && 0.14052010    && 0.24438270   && 0.3588886     \\    
     3 && 1.5    &&1.5    && 0.04506237    && 0.08254522   && 0.1152116    \\ 
       && 2      &&2.5    && 0.09031513    && 0.17280620   && 0.2515469    \\ 
			 &&           &&         &&              &&          &&   \\ \hline
       &&           &&         &&              &&          &&   \\
       && 0.5    &&1.1    && 0.08106928  && 0.14099000     && 0.20705110    \\    
   5.2 && 1.5    &&1.5    && 0.02599752  && 0.04762224     && 0.06646824     \\ 
       && 2      &&2.5    && 0.05210488  && 0.09969589     && 0.14512320     \\ 
				 &&           &&         &&              &&          &&   \\ \hline \hline

\end{tabular}
\caption{First Quartile (Q1), Median (Q2) and Third Quartile (Q3) For Selected Values of the Parameters of the \textit{PL-APT} distribution.}
\label{tab01}
\end{table} 

%\Bin For \ $\alpha = 1$ we have the following table

\begin{table}%[h]
\centering
\begin{tabular}{c cc cc cc cc cc}
\hline \hline  
$ \theta$ && $\alpha$ &&$ \beta$ &&    $Q1$       &&  $Q2$    && $Q3$ \\ \hline %\hline    
 &&           &&         &&              &&          &&   \\    
        && 1    &&1.1    && 1.4514270   && 2.643070   && 4.331719  \\    
    0.6 && 1    &&1.5    && 1.0760650   && 2.211457   && 3.869062  \\ 
        && 1    &&2.5    && 0.7581068   && 1.735562   && 3.277827  \\ 
				  &&           &&         &&              &&          &&   \\ \hline
        &&           &&         &&              &&          &&   \\

       && 1    &&1.1     && 0.5805708    && 1.0572280  && 1.732687   \\    
   1.5 && 1    &&1.5     && 0.4304260    && 0.8845827  && 1.547625    \\ 
       && 1    &&2.5     && 0.3032427    && 0.6942247  && 1.311131    \\ 
			 &&           &&         &&              &&          &&   \\ \hline
        &&           &&         &&              &&          &&   \\

       && 1    &&1.1    && 0.2902854    && 0.5286141    && 0.8663437    \\    
     3 && 1    &&1.5    && 0.2152130    && 0.4422913    && 0.7738124    \\ 
       && 1    &&2.5    && 0.1516214    && 0.3471123    && 0.6555655   \\ 
			  &&           &&         &&              &&          &&   \\ \hline
        &&           &&         &&              &&          &&   \\

       && 1    &&1.1    && 0.2902854   && 0.5286141   && 0.8663437   \\    
   5.2 && 1    &&1.5    && 0.1241614   && 0.2551681   && 0.4464302     \\ 
       && 1    &&2.5    && 0.0874738   && 0.2002571   && 0.3782109    \\ 
      &&       &&         &&              &&          &&   \\ \hline \hline 
        
\end{tabular}
\caption{First Quartile (Q1), Median (Q2) and Third Quartile (Q3) For Selected Values of the Parameters of the Pseudo-Lindley Distribution.}
\label{tab02}
\end{table} 

\Bin We can remark from Table \ref{tab01} and Table \ref{tab02} that :\\

\begin{enumerate}
	\item if $\alpha $ increases, $ \beta $ increases and  $ \theta $ constant then the value of each of $Q1$, $Q2$ and $Q3$ increases, 
	
	\item if $\alpha = \beta $ and $\theta$ increases then the value of each of $Q1$, $Q2$ and $Q3$ decreases. 
\end{enumerate}

\newpage
\subsubsection{Extremal quantile function}

\Bin To find the extremal quantile function of the \textit{PL-APT} distribution we solve the
equation $G_{\alpha}\left( x\right) =1-u$ , with $u\in \left( 0;1\right)$. If $\alpha
=1$ then $G_{\alpha}\left( x\right) =$ $F\left( x\right)$, is the \textit{cdf} of the Pseudo-Lindley
distribution. Its asymptotic properties are developed by \cite{gslo2020}. Now, we focus on the condition that $\alpha \in ] 0, +\infty [\setminus \{1\}$ and have the extremal quantile defined as follows\textcolor{red}{:}\\

\begin{eqnarray}
G_{\alpha}^{-1}\left( 1-u\right) &=&C_{0}+\theta ^{-1}\log \left( \frac{1}{u}\right)
+\theta ^{-1}\log \left( \log \left( \frac{1}{u}\right) \right) \notag \\
&+&\frac{\theta ^{-1}\log \left( -C\left( \alpha ,\beta \right) \right) }{\log \left( 
\frac{1}{u}\right) }+\theta ^{-1}K\left( u\right),  \label{QuAPPLD}
\end{eqnarray}

\Bin where $K\left( u\right) \longrightarrow 0 $  \ as \  $ u \longrightarrow 0 $. 

\Bin The details of the development of Equation (\ref{QuAPPLD}) are exposed in Appendix (A1), page \pageref{pageAppendixA1}.

\subsection{Extremes} \label{extremes}
\Ni In this part, we present the extremal properties of the \textit{PL-APT} distribution. We establish its domain of attraction, the expansion of the maximum values and finish with the study of the extremal value index estimation.

\subsubsection{Domain of attraction}
\Ni For any positive $\lambda $ , and $\alpha \in ] 0, +\infty [\setminus \{1\} $ we have the following limit
to determine the domain of attraction of the \textit{PL-APT} distribution, 

\begin{equation*}
\lim_{u\longrightarrow 0}\left( \frac{G_{\alpha}^{-1}\left( 1-\lambda u\right)
-G_{\alpha}^{-1}\left( 1-u\right) }{s\left( u\right) }\right) =L\left( \lambda
\right),
\end{equation*}%
\Ni where $$ s\left( u\right) = -u \left( G_{\alpha}^{-1}(1-u) \right)^{\prime}, \ \ \ 0 < u < 1. $$

\Ni We have 

\begin{eqnarray*}
L\left( \lambda \right) &=&\lim_{u\longrightarrow 0}\left( \frac{\theta
^{-1}\log \left( \frac{1}{\lambda }\right) +\frac{1}{\theta }\log \left( 1+%
\frac{\log 1/\lambda }{\log 1/u}\right) +\theta ^{-1}K\left( u\right) }{%
-u\left( G^{-1}\left( 1-u\right) \right) ^{\prime }}\right) \\
&& \\
&=&\lim_{u\longrightarrow 0}\left( \frac{\theta ^{-1}\log \left( \frac{1}{%
\lambda }\right) +\frac{1}{\theta }\log \left( 1+\frac{\log 1/\lambda }{\log
1/u}\right) +\theta ^{-1}K\left( u\right) }{-u\left( -\frac{1}{\theta }%
\times \frac{1}{u}\right) }\right) \\
&& \\
&=&\lim_{u\longrightarrow 0}\left( \log \left( \frac{1}{\lambda }\right)
+\log \left( 1+\frac{\log 1/\lambda }{\log 1/u}\right) +\theta ^{-1}K\left(
u\right) \right) \\
&=&-\log \lambda.
\end{eqnarray*}%

\Bin Since 

\begin{equation*}
\lim_{u\longrightarrow 0}\left( \frac{G_{\alpha}^{-1}\left( 1-\lambda u\right)
-G_{\alpha}^{-1}\left( 1-u\right) }{s\left( u\right) }\right) =-\log \left( \lambda
\right),
\end{equation*}%

\Bin by the $ \pi -$variation criteria developed in \cite{ips-wcia-ang} (see Proposition 11, page 88), we conclude that $G_{\alpha}$ belongs to the Gumbel domain denoted by  $ G_{\alpha}\in D\left( \Lambda \right) $.

\subsubsection{Expansion of the maximum values}

\bigskip Let $\ Z_{n}=-\log \left( nU_{1,n}\right) $ . By using the Renyi
representation , we have  that , if $\alpha \in ] 0, +\infty [\setminus \{1\}$ 
\begin{equation*}
M_{1}=X_{n,n}-G_{\alpha}^{-1}\left( 1-1/n\right) =G_{\alpha}^{-1}\left( 1-U_{1,n}\right)
-G_{\alpha}^{-1}\left( 1-1/n\right)
\end{equation*}%
\begin{eqnarray*}
M_{1} &=&C_{0}+\theta ^{-1}\log \left( \frac{1}{U_{1,n}}\right) +\theta
^{-1}\log \left( \log \left( \frac{1}{U_{1,n}}\right) \right) +\frac{\theta
^{-1}\log \left( -C\left( \alpha ,\beta \right) \right) }{\log \left( \frac{1%
}{U_{1,n}}\right) }+\theta ^{-1}K\left( U_{1,n}\right) \\
&-&C_{0}-\theta ^{-1}\log \left( n\right) -\theta ^{-1}\log \left( \log
\left( n\right) \right) -\frac{\theta ^{-1}\log \left( -C\left( \alpha
,\beta \right) \right) }{\log \left( n\right) }-\theta ^{-1}K\left(
1/n\right) . \\
&=&-\theta ^{-1}\log \left( nU_{1,n}\right) +\theta ^{-1}\log \left( 1+\frac{%
Z_{n}}{\log n}\right) -\theta ^{-1}\log \left( -C\left( \alpha ,\beta
\right) \right) \left( \frac{1}{\log \left( U_{1,n}\right) }+\frac{1}{\log
\left( n\right) }\right) \\
&+&\theta ^{-1}\left( K\left( U_{1,n}\right) -K\left( 1/n\right) \right),
\end{eqnarray*}

\Bin and hence 

\begin{eqnarray*}
&&\frac{X_{n,n}-G_{\alpha}^{-1}\left( 1-1/n\right) }{\left( 1/\theta \right) }\\
&&=-\log
\left( nU_{1,n}\right) +\log \left( 1+\frac{-\log \left( nU_{1,n}\right) }{%
\log n}\right) +\frac{-\log \left( nU_{1,n}\right) \log \left( -C\left(
\alpha ,\beta \right) \right) }{\left( \log n\right) \left( \log
U_{1,n}\right) } \\
&&+\left( K\left( U_{1,n}\right) -K\left( 1/n\right) \right) \\
&&=Z_{n}+\log \left( 1+\frac{Z_{n}}{\log n}\right) +\frac{Z_{n}\log \left(
-C\left( \alpha ,\beta \right) \right) }{\left( \log n\right) \left( \log
U_{1,n}\right) }+\left( K\left( U_{1,n}\right) -K\left( 1/n\right) \right).
\end{eqnarray*}

\Bin Since $\log \left( 1+\frac{Z_{n}}{\log n}\right) $ \ and $%
\frac{Z_{n}\log \left( -C\left( \alpha ,\beta \right) \right) }{\left( \log
n\right) \left( \log U_{1,n}\right) }$  converge both to $0$ as $%
u\longrightarrow  0 $ then we have

\begin{equation*}
\frac{X_{n,n}-G_{\alpha}^{-1}\left( 1-1/n\right) }{\left( 1/\theta \right) }=Z_{n}+O_{%
\mathbb{P}}\left( 1\right) .
\end{equation*}

\Bin So we have  $X_{n,n}$ converge to a Gumbel law $\Lambda $ with \textit{cdf}

\begin{equation*}
\Lambda (x)=\exp \left( -\exp \left( -x\right) \right) ,x\in \mathbb{R}.
\end{equation*}

\Ni Likewise, for $k=k\left( n\right) \longrightarrow +\infty 
$ such that $k\left( n\right) /n\longrightarrow 0$ , we have

\begin{eqnarray*}
M_{k} &=&\frac{X_{n-k,n}-G_{\alpha}^{-1}\left( 1-k/n\right) }{\left( 1/\theta \right) 
} \\
&=&\frac{G_{\alpha}^{-1}\left( 1-U_{k+1,n}\right) -G_{\alpha}^{-1}\left( 1-k/n\right) }{\left(
1/\theta \right) } \\
&=&\log \left( \frac{1}{U_{k+1,n}}\right) +\log \left( \log \left( \frac{1}{%
U_{k+1,n}}\right) \right) +\frac{\log \left( -C\left( \alpha ,\beta \right)
\right) }{\log \left( \frac{1}{U_{k+1,n}}\right) }+K\left( U_{k+1,n}\right)
\\
&&-\log \left( n/k\right) -\log \left( \log \left( n/k\right) \right) -\frac{%
\log \left( -C\left( \alpha ,\beta \right) \right) }{\log \left( n/k\right) }%
-K\left( k/n\right). 
\end{eqnarray*}

\Bin Hence,

\begin{eqnarray*}
M_{k} &=&-\log \left( nU_{k+1,n}/k\right) +\log \left( 1+\frac{-\log \left(
nU_{k+1,n}/k\right) }{\log \left( n/k\right) }\right) \\ &+& \log \left( -C\left(
\alpha ,\beta \right) \right) \left( \frac{1}{\log \left( \frac{1}{U_{k+1,n}}%
\right) }+\frac{1}{\log \left( n/k\right) }\right)+K\left( U_{k+1,n}\right) -K\left( k/n\right) \\
&=&T_{n}+\log \left( 1+\frac{T_{n}}{\log q_{n}}\right) +\log \left( -C\left(
\alpha ,\beta \right) \right) \left( \frac{\log \left( kU_{k+1,n}/n\right) }{%
\log \left( U_{k+1,n}\right) \log \left( n/k\right) }\right) +O_{\mathbb{P}%
}\left( \left( \log q_{n}\right) ^{-2}\right),
\end{eqnarray*}

\Bin where $T_{n}=-\log \left( nU_{k+1,n}/k\right) $ and $%
q_{n}=n/k$ which goes to $\ +\infty $ as $n\longrightarrow +\infty .$ So ,
we have

\begin{eqnarray*}
\frac{X_{n-k,n}-G_{\alpha}^{-1}\left( 1-k/n\right) }{\left( 1/\theta \right) }
&=&T_{n}+\log \left( 1+\frac{T_{n}}{\log q_{n}}\right) \\
&+&\log \left( -C\left( \alpha ,\beta \right) \right) \left( \frac{\log
\left( kU_{k+1,n}/n\right) }{\log \left( U_{k+1,n}\right) \log \left(
n/k\right) }\right) + O_{\mathbb{P}}\left( \left( \log q_{n}\right)
^{-2}\right) .
\end{eqnarray*}

\subsubsection{Estimation of the extreme value index}

\Bin In \cite{ngomlo2016} create a new class of estimators of
the extreme value index  built around the statistic defined by

\begin{equation*}
T_{n}\left( f,s\right) =\sum_{j=1}^{k(n)}f\left( j\right) \left[ \log
X_{n-j+1,n}-\log X_{n-j,n}\right] ^{s},
\end{equation*}

\Bin where $f$ is a positive measurable mapping defined from $ \mathbb{N} -\left\{ 0\right\} $ to $ \mathbb{R}-\left\{ 0\right\} $, and $s$ is a positive real
number. To estimate the extreme value index , it is necessary to define the
following expressions. We have

\begin{equation*}
a_{n}\left( f,s\right) =\Gamma \left( s+1\right) \sum_{j=1}^{k(n)}f\left(
j\right) j^{-s},
\end{equation*}

\begin{equation*}
s_{n}^{2}\left( f,s\right) =\left\{ \Gamma \left( 2s+1\right) -\Gamma
^{2}\left( s+1\right) \right\} \sum_{j=1}^{k(n)}f^{2}\left( j\right) j^{-2s}
\end{equation*}

\Bin and

\begin{equation*}
B_{n}\left( f,s\right) =\max \left\{ \frac{f\left( j\right) j^{-s}}{%
s_{n}\left( f,s\right) },1\leq j\leq k\right\} .
\end{equation*}

\Bin The \cite{ngomlo2016} estimator, called the double Hill
estimator is defined by the expression below

\begin{equation*}
M_{n}\left( f,s\right) =\left( \frac{T_{n}\left( f,s\right) }{a_{n}\left(
f,s\right) }\right) ^{1/s}.
\end{equation*}

\Bin We remark that, if $ f\left( j\right) =j $ and $s=1$ then
\bigskip\ $M_{n}\left( f,s\right) =M_{n}\left( j,1\right) =H_{n}$ is the
\cite {hill1975} estimator and if $ f\left( j\right) =j^{\tau };s>0$ and $s=1 $ then
\bigskip\ $M_{n}\left( f,s\right) =M_{n}\left( j^{\tau },1\right)
=T_{n}\left( \tau ,1\right) $ is the \cite{demedioplo} estimator .

\begin{theorem}

\Bin We have \\

\Bin (a) If the following conditions $ \frac{a_{n}\left( f,s\right) }{s_{n}\left( f,s\right) }\rightarrow 0$ and $
B_{n}\left( f,s\right) \rightarrow 0$  hold as $n\rightarrow +\infty ,$ then 
\begin{equation*}
\frac{a_{n}\left( f,s\right) }{s_{n}\left( f,s\right) }\left[ M_{n}\left(
f,s\right) -\gamma ^{s}\right] \rightsquigarrow \mathcal{N} \left( 0,\gamma
^{2s}\right) .
\end{equation*}

\Bin (b) Furthermore, if  

$$ \frac{a_{n}\left( f,s\right) }{s_{n}\left( f,s\right) }\rightarrow +\infty $$ 

 \Bin then

\begin{equation*}
\frac{a_{n}\left( f,s\right) }{s_{n}\left( f,s\right) }\left[ M_{n}\left(
f,s\right) -\gamma ^{s}\right] \rightsquigarrow \mathcal{N} \left( 0,\gamma
^{2s}C^{2}\left( s\right) \right) .
\end{equation*}
 \label{doubldhilltheo}
\end{theorem}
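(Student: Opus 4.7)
The plan is to reduce the weighted log-spacings statistic $T_n(f,s)$ to a weighted sum of i.i.d.\ random variables via a Renyi / tail-quantile linearisation, and then apply the Lindeberg--Feller CLT. First, I would combine the Renyi representation of uniform order statistics with the tail expansion of $F$ (second-order regular variation in the Fr\'echet case, or the analogous $\Pi$-variation expansion in the Gumbel case---this is where the remainder $K(u)\to 0$ of Section \ref{quantextremquant} enters for the \textit{PL-APT}) to write
\[
D_j := \log X_{n-j+1,n}-\log X_{n-j,n} \;\approx\; \frac{\gamma}{j}\,E_j + r_{n,j},
\]
where $E_1,E_2,\dots$ are i.i.d.\ $\mathrm{Exp}(1)$ and $r_{n,j}$ is a second-order remainder. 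Raising to the $s$-th power yields $D_j^s \approx (\gamma/j)^s E_j^s + \text{lower order}$, so
\[
T_n(f,s) \;\approx\; \gamma^s \sum_{j=1}^{k(n)} f(j)\,j^{-s}\,E_j^s \;+\; R_n.
\]

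Next I would compute the mean and variance of the dominant part. Since $\mathbb{E}[E_j^s]=\Gamma(s+1)$ and $\mathrm{Var}(E_j^s)=\Gamma(2s+1)-\Gamma^{2}(s+1)$, the leading approximation has mean $\gamma^s a_n(f,s)$ and variance $\gamma^{2s} s_n^{2}(f,s)$, in agreement with the very definitions of $a_n$ and $s_n^{2}$. The hypothesis $B_n(f,s)\to 0$ is precisely the uniform asymptotic negligibility of the weights $f(j)j^{-s}/s_n(f,s)$, i.e.\ the Lindeberg condition for the weighted sum. The Lindeberg--Feller theorem then delivers
\[
\bigl(T_n(f,s)-\gamma^s a_n(f,s)\bigr)\big/\bigl(\gamma^s s_n(f,s)\bigr) \;\rightsquigarrow\; \mathcal{N}(0,1),
\]
which rearranges to $(a_n/s_n)(T_n/a_n-\gamma^s)\rightsquigarrow \mathcal{N}(0,\gamma^{2s})$. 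Passing from $T_n/a_n$ to $M_n(f,s)=(T_n/a_n)^{1/s}$ is the delta-method step with $g(x)=x^{1/s}$ (or the direct identity $M_n^s=T_n/a_n$ if the statement is read on the $s$-th power scale).

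The split between parts (a) and (b) comes from the remainder $R_n$. When $a_n/s_n\to 0$, the natural scale of $R_n$---essentially $a_n$ times a second-order factor---is dominated by the Gaussian fluctuation of order $s_n$, so only the pure exponential variance $\gamma^{2s}$ survives in the limit. When $a_n/s_n\to +\infty$, the roles reverse: the second-order term now contributes at the same asymptotic scale as the Lindeberg--Feller sum, and the additional variance is encoded by a constant $C(s)$ computed as the limit (in $L^{2}$) of a suitably normalised $R_n$ together with the covariance between $R_n$ and the leading sum.

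The main obstacle will be the precise control of $R_n$ in the regime (b): I will need a uniform (in $1\le j\le k(n)$) second-order expansion of $D_j$, then the limiting variance of $R_n/a_n$ and its covariance with the Lindeberg--Feller sum, and finally an explicit identification of $C(s)$. Once those bounds are in place, Slutsky's lemma and the continuous mapping theorem assemble both convergences.
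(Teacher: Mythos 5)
Your main line of attack coincides with the paper's: the paper also starts from the R\'enyi representation combined with the extremal quantile expansion $G_{\alpha}^{-1}(1-u)=C_{0}+\theta^{-1}\log(1/u)+\dots+\theta^{-1}K(u)$ of Appendix (A1) to write $f(j)\left(\log X_{n-j+1,n}-\log X_{n-j,n}\right)^{s}=f(j)\left((1/\theta)j^{-1}E_{j,n}+C_{2}A_{n}+O_{\mathbb{P}}(B_{n})\right)^{s}$, controls the error by a mean value theorem bound (the inequalities (IL) and (IL1), with the explicit negligibility requirement $S_{n}(f,1)/(S_{n}(f,s)\log n)\rightarrow 0$ where you only say ``dominated''), reduces everything to the weighted i.i.d.\ sum $S_{n}(f,s)=\sum_{j\le k(n)}f(j)j^{-s}E_{j,n}^{s}$ with moments $\Gamma(s+1)$ and $\Gamma(2s+1)-\Gamma^{2}(s+1)$, and applies the L\'evy--Feller--Lindeberg theorem with $B_{n}(f,s)\rightarrow 0$ playing exactly the Lindeberg/negligibility role you assign it. Up to that point your proposal and the paper's proof are essentially the same argument.

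The genuine gap is in your treatment of part (b). You attribute the constant $C(s)$ to an extra variance contribution of the second-order remainder $R_{n}$ (its limiting variance and covariance with the Lindeberg--Feller sum), and you leave its identification as an open obstacle. That is not the mechanism in the paper, and it is not what the hypothesis $a_{n}(f,s)/s_{n}(f,s)\rightarrow+\infty$ is for: the remainder is handled once and for all by the bound (IL1) in both regimes. The condition $a_{n}/s_{n}\rightarrow+\infty$ is what makes $T_{n}(f,s)/a_{n}(f,s)$ consistent for the limit value (since $T_{n}/a_{n}$ differs from it by $(s_{n}/a_{n})\,\xi_{n}$ with $\xi_{n}$ asymptotically Gaussian), and $C(s)$ then arises purely from the delta method applied to $g(t)=t^{1/s}$, i.e.\ from the derivative factor produced when passing from $M_{n}^{s}=T_{n}/a_{n}$ to $M_{n}$; no covariance computation with $R_{n}$ is involved. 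Conversely, in regime (a) ($a_{n}/s_{n}\rightarrow 0$) this delta-method step is unavailable precisely because $T_{n}/a_{n}$ is no longer consistent, which is why the conclusion there must be read on the $s$-th power scale, as you briefly note. So to complete your proposal you should drop the remainder-based explanation of $C(s)$, prove consistency of $T_{n}/a_{n}$ under $a_{n}/s_{n}\rightarrow+\infty$, and identify $C(s)$ explicitly from $g^{\prime}$ at the limit point; as written, part (b) of your argument would not produce the stated limit.
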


\begin{proof}
\Bin Here, we establish the proof of Theorem \ref{doubldhilltheo}.

\Bin As the \textit{cdf}  $G_{\alpha} $ of the \textit{PL-APT}  belongs  to the attraction  domain of Gumbel, it is known that to estimate the extremal index it is equivalent to use in the calculus the $ G_{\alpha}^{-1}(1-u) $ or $\log G_{\alpha}^{-1}(1-u) $.

\Bin Let
\begin{eqnarray*}
R_{j,n} &=&f(j)\left( \log X_{n-j+1,n}-\log X_{n-j,n}\right) ^{s} \\
&=&f(j)\left(  G_{\alpha}^{-1}\left( 1-U_{j,n}\right) - G_{\alpha}^{-1}\left(
1-U_{j+1,n}\right) \right) ^{s} \\
&=&f(j)\left( (1/\theta) \log \left( \frac{U_{j+1,n}}{U_{j,n}}\right) +C_{2}A_{n}+O_{%
\mathbb{P}}\left( B_{n}\right) \right) ^{s} \\
&=&f(j)\left( (1/\theta)j^{-1}E_{j,n}+C_{2}A_{n}+O_{\mathbb{P}}\left( B_{n}\right)
\right) ^{s}
\end{eqnarray*}

\bigskip \noindent with

\begin{equation*}
A_{n}=\max \left\{ U_{j,n}^{2};U_{j+1,n}^{2}\right\} \leq -2\log \left( 
\frac{U_{j+1,n}}{U_{j,n}}\right)
\end{equation*}

\bigskip \noindent and

\begin{equation*}
O_{\mathbb{P}}\left( B_{n}\right) =O_{\mathbb{P}}\left( \left( \log n\right)
^{-2}\right).
\end{equation*}

\bigskip \noindent By the mean value theorem and $j\in \left\{
1,...,k\left( n\right) \right\} ,$ $s>1,$ we get

\begin{eqnarray*}
 &  & \\
 &  & R_{j,n}-(1/\theta)^{s}f(j)j^{-s}E_{j,n}^{s} \\
 &\leq &sf(j)\left\vert C_{2}A_{n}+O_{%
\mathbb{P}}\left( B_{n}\right) \right\vert \left( (1/\theta)j^{-1}E_{j,n}+\left\vert
C_{2}A_{n}\right\vert +\left\vert O_{\mathbb{P}}\left( B_{n}\right)
\right\vert \right) ^{s-1} \\
%\end{eqnarray*}
%\begin{eqnarray*}
 %&  & \\
&\leq & (1/\theta)sf(j)j^{-1}E_{j,n}\left( (1/\theta)j^{-1}E_{j,n}+\left\vert
C_{2}A_{n}\right\vert +\left\vert O_{\mathbb{P}}\left( B_{n}\right)
\right\vert \right) ^{s-1} \\
&\leq &(1/\theta)sf(j)j^{-1}E_{j,n}\left(
D_{s}(1/\theta)^{s-1}j^{s-1}E_{j,n}^{s-1}+(1/\theta)D_{s}^{2}j^{-1}E_{j,n}+O_{\mathbb{P}%
}\left( \frac{D_{s}^{2}}{\left( \log n\right) ^{-2}}\right) \right) .
\end{eqnarray*}

\bigskip \noindent Applying the sum , we get

\begin{eqnarray*}
 &  & \\
 &  &  \sum_{j=1}^{k\left( n\right) }\left(
R_{j,n}-(1/\theta)^{s}f(j)j^{-s}E_{j,n}^{s}\right) \\
%\end{eqnarray*}
%\begin{eqnarray*}
 &\leq &(1/\theta)s\sum_{j=1}^{k\left(
n\right) }f(j)j^{-1}E_{j,n}\left(
D_{s}(1/\theta)^{s-1}j^{s-1}E_{j,n}^{s-1}+(1/\theta)D_{s}^{2}j^{-1}E_{j,n}+O_{\mathbb{P}%
}\left( \frac{D_{s}^{2}}{\left( \log n\right) ^{-2}}\right) \right) \\
\end{eqnarray*}

\Bin and

\begin{eqnarray*}
&  & \\
 &  & \sum_{j=1}^{k\left( n\right) }R_{j,n}-(1/\theta)^{s}\sum_{j=1}^{k\left( n\right)
}f(j)j^{-s}E_{j,n}^{s} \\
%\end{eqnarray*}
%\begin{eqnarray*}
&\leq & (1/\theta)s\sum_{j=1}^{k\left( n\right)
}f(j)j^{-1}E_{j,n}\left(
D_{s}(1/\theta)^{s-1}j^{s-1}E_{j,n}^{s-1}+(1/\theta)D_{s}^{2}j^{-1}E_{j,n}+O_{\mathbb{P}%
}\left( \frac{D_{s}^{2}}{\left( \log n\right) ^{-2}}\right) \right) \\
\end{eqnarray*}

\begin{eqnarray*}
%&  & \\
 &  & \left\vert T_{n}\left( f,s\right) -(1/\theta)^{s}S_{n}\left( f,s\right) \right\vert \\
%\end{eqnarray*}
%\begin{eqnarray*}
&\leq &(1/\theta)sS_{n}\left( f,1\right) \left( D_{s}(1/\theta)^{s-1}S_{n}\left( f,s-1\right)
+(1/\theta)D_{s}^{2}S_{n}\left( f,1\right) +O_{\mathbb{P}}\left( \frac{D_{s}^{2}}{%
\left( \log n\right) ^{-2}}\right) \right) \text{ (IL)}
\end{eqnarray*}

\Bin where

\begin{equation*}
S_{n}\left( f,s\right) =\sum_{j=1}^{k\left( n\right) }f(j)j^{-s}E_{j,n}^{s}
\end{equation*}

\Bin and

\begin{equation*}
T_{n}\left( f,s\right) =\sum_{j=1}^{k\left( n\right) }f(j)\left( \log
X_{n-j+1,n}-\log X_{n-j,n}\right) ^{s}.
\end{equation*}

\bigskip \noindent The random variable $S_{n}\left( f,s\right) $ is a
sequence of partial sum of random real values and independent random variables
indexed by $j\in \left\{ 1,...,k\left( n\right) \right\} $ with first and
second moments

\begin{equation*}
\mu _{1}=\Gamma \left( s+1\right) f\left( j\right) j^{-s}\text{ \ and \ }\mu
_{2}=\left( \text{\ }\Gamma \left( 2s+1\right) -\Gamma \left( s+1\right)
^{2}\right) f\left( j\right) j^{-s}.
\end{equation*}

\Bin Its asymptotic normality is given as follows by using the
\ Levy-Feller-Linderberg (see \cite{ips-mfpt-ang}, Theorem 20),

\begin{equation*}
\left( \frac{1}{s_{n}\left( f,s\right) }\sum_{j=1}^{k\left( n\right) }\left(
f(j)j^{-s}\left( E_{j,n}^{s}-\text{\ }\Gamma \left( s+1\right) \right)
\right) \leadsto \mathcal{N}\left( 0,1\right) \right) 
\end{equation*}

\Bin $\text{ \ and }
B_{n}\left( f,s\right) \longrightarrow 0\text{ as }n\longrightarrow +\infty $,

\bigskip \noindent where

\begin{equation*}
B_{n}\left( f,s\right) =\frac{1}{C\left( s\right) }\left\{ \frac{\mathbb{V}%
ar\left( f(j)j^{-s}\left( E_{j,n}^{s}-\text{\ }\Gamma \left( s+1\right)
\right) \right) }{\sum_{j=1}^{k\left( n\right) }\mathbb{V}ar\left(
f(j)j^{-s}\left( E_{j,n}^{s}-\text{\ }\Gamma \left( s+1\right) \right)
\right) }\right\} .
\end{equation*}

\Bin By combining the Lindeberg condition, the Cauchy-Schwarz
inequality and the central limit theorem, we get for $S_{n}\left(
f,s\right) $ the result below

\begin{equation*}
\frac{S_{n}\left( f,s\right) -(1/\theta)^{s}a_{n}\left( f,s\right) }{s_{n}\left(
f,s\right) }\leadsto \mathcal{N}\left( 0,1\right) .
\end{equation*}

\Bin The continuation of the inequality (IL) implies

\begin{eqnarray*}%\label{equIL1}
\frac{T_{n}\left( f,s\right) -(1/\theta)^{s}S_{n}\left( f,s\right) }{s_{n}\left(
f,s\right) }-\frac{S_{n}\left( f,s\right) -(1/\theta)^{s}a_{n}\left( f,s\right) }{%
s_{n}\left( f,s\right) } &\leq &O_{\mathbb{P}}\left( \frac{S_{n}\left(
f,1\right) }{S_{n}\left( f,s\right) \log n}\right) \\
\end{eqnarray*}

\begin{eqnarray*}\label{equIL1}
\frac{T_{n}\left( f,s\right) -S_{n}\left( f,s\right) }{s_{n}\left(
f,s\right) }-\frac{(1/\theta)^{s}\left( S_{n}\left( f,s\right) -a_{n}\left(
f,s\right) \right) }{s_{n}\left( f,s\right) } &\leq &O_{\mathbb{P}}\left( 
\frac{S_{n}\left( f,1\right) }{S_{n}\left( f,s\right) \log n}\right). (IL1)
\end{eqnarray*}
%\ref{equIL1}
\Bin The right hand side of the inequality $(IL1)$ above tends to zero in probability if and only if 
\begin{eqnarray*}
\left( \frac{S_{n}\left( f,1\right) }{S_{n}\left( f,s\right) \log n}\right) \longrightarrow 0 \  \  \ as \  \ \ n \longrightarrow +\infty .
\end{eqnarray*}

\Bin Let 
\begin{equation*}
M_{n}\left( f,s\right) =\left( \frac{T_{n}\left( f,s\right) }{a_{n}\left(
f,s\right) }\right) ^{1/s}.
\end{equation*}

\bigskip \noindent By combining the results above for the left hand side of the inequality $(IL1)$ above, we arrive at

\begin{equation*}
\frac{T_{n}\left( f,s\right) -(1/\theta)^{s}S_{n}\left( f,s\right) }{s_{n}\left(
f,s\right) }=\frac{a_{n}\left( f,s\right) }{s_{n}\left( f,s\right) }\left[ 
\frac{T_{n}\left( f,s\right) }{a_{n}\left( f,s\right) }-(1/\theta)^{s}\right].
\end{equation*}
\Bin So, we have 
\begin{equation*}
\frac{a_{n}\left( f,s\right) }{s_{n}\left( f,s\right) }\left[ 
\frac{T_{n}\left( f,s\right) }{a_{n}\left( f,s\right) }-(1/\theta)^{s}\right] = Z_{n}+O_{\mathbb{P}}(1).
\end{equation*}

\Bin If $ a_{n}\left( f,s\right) / s_{n}\left( f,s\right)  \longrightarrow +\infty $, then we have by using the delta method applied to $g(t)=t^{1/s} $ , 

\begin{equation*}
\frac{a_{n}\left( f,s\right) }{s_{n}\left( f,s\right) }\left[ 
\frac{T_{n}\left( f,s\right) }{a_{n}\left( f,s\right) }-(1/\theta)^{s}\right] \rightsquigarrow \mathcal{N} \left(0,(1/\theta)^{2s}C^{2}(s) \right).
\end{equation*}
\end{proof}

\section{Conclusion} \label{conclud}
\Bin In this paper, the \textit{PL-APT} distribution, which is flexible for modeling lifetime data, is presented. This study is motivated by the extensive use of the Pseudo-Lindley distribution in Statistics and  Economics. The \textit{PL-APT} distribution provides more flexibility than the Lindley and the Pseudo-Lindley distributions to analyze lifetime data. The \textit{PL-APT} distribution has several new and known properties as its mathematical properties and asymptotic convergence of the extreme value index. Its parameters are estimated  by the maximum likelihood method. The possibility of expanding Pseudo-Lindley into other areas can be achieved with the new quantile distribution and the extremal quantile distribution of the \textit{PL-APT}. In a next article, we face to study the simulations with application of real lifetime data.\\

\Bin \textbf{Acknowledgment.} The authors Ngom,  Diallo and Fall thank Professor Lo for his advice, invaluable help, comments and recommendations.\\

%\newpage

\newpage

\Bin \textbf{Appendix (A1): Extremal quantile of the \textit{PL-APT}  distribution.\label{pageAppendixA1}} \\

\Bin We solve for $x$ the equation $G_{\alpha}\left( x\right) =1-u$ , $\alpha \in ] 0, +\infty [\setminus \{1\}$. Thus, we have 
\begin{eqnarray*}
\frac{1-\alpha ^{1-\beta ^{-1}\left( \beta +\theta x\right) \exp \left(
-\theta x\right) }}{1-\alpha } &=&1-u \\
&& \\
1-\alpha ^{1-\beta ^{-1}\left( \beta +\theta x\right) \exp \left( -\theta
x\right) } &=&\left( 1-u\right) \left( 1-\alpha \right) \\
&& \\
\alpha ^{1-\beta ^{-1}\left( \beta +\theta x\right) \exp \left( -\theta
x\right) } &=&\alpha +u\left( 1-\alpha \right) \\
&& \\
1-\beta ^{-1}\left( \beta +\theta x\right) \exp \left( -\theta x\right)
&=&\log \left( \alpha +u\left( 1-\alpha \right) \right) \times \left( \log
\alpha \right) ^{-1} \\
&& \\
-\beta ^{-1}\left( \beta +\theta x\right) \exp \left( -\theta x\right)
&=&-1+\log \left( \alpha +u\left( 1-\alpha \right) \right) \times \left(
\log \alpha \right) ^{-1} \\
&& \\
-\left( \beta +\theta x\right) \exp \left( -\theta x\right) &=&-\beta +\frac{%
\beta }{\log \alpha }\log \left( \alpha +u\left( 1-\alpha \right) \right).
\\
&&
\end{eqnarray*}

\Bin Multiplying both sides by $\exp \left( -\beta \right),$ we have

\begin{eqnarray*}
-\left( \beta +\theta x\right) \exp \left( -\theta x\right) \exp \left(
-\beta \right) &=&\exp \left( -\beta \right) \left( -\beta +\frac{\beta }{%
\log \alpha }\log \left( \alpha +u\left( 1-\alpha \right) \right) \right) \\
\end{eqnarray*}

\begin{equation}\label{equEL}
\left( -\beta -\theta x\right) \exp \left( -\beta -\theta x\right) =-\beta
\exp \left( -\beta \right) +\frac{\beta \exp \left( -\beta \right) }{\log
\alpha }\log \left( \alpha +u\left( 1-\alpha \right) \right) . 
\end{equation}

\Bin The right hand side  of the equation (\ref{equEL}) satisfies the negative branch $W_{-1}\left( .\right) $ of the Lambert $W$ function. Let $W\left( x\right) = - \beta -\theta x$. Herein, we have

\begin{eqnarray*}
W(x)\exp \left( W(x)\right) &=&-\beta \exp \left( -\beta \right) +\frac{%
\beta \exp \left( -\beta \right) }{\log \alpha }\log \left( \alpha +u\left(
1-\alpha \right) \right) , \\
&&
\end{eqnarray*}

\Bin and

\begin{equation*}
-\beta -\theta x=W_{-1}\left( -\beta \exp \left( -\beta \right) +\frac{\beta
\exp \left( -\beta \right) }{\log \alpha }\log \left( \alpha +u\left(
1-\alpha \right) \right) \right).
\end{equation*}

\Bin Hence,

\begin{eqnarray*}
-\beta -\theta x &=&W_{-1}\left( -\beta \exp \left( -\beta \right) +\frac{%
\beta \exp \left( -\beta \right) }{\log \alpha }\log \left( \alpha +u\left(
1-\alpha \right) \right) \right) \\
&& \\
-\theta x &=&\beta +W_{-1}\left( -\beta \exp \left( -\beta \right) +\frac{%
\beta \exp \left( -\beta \right) }{\log \alpha }\log \left( \alpha +u\left(
1-\alpha \right) \right) \right) \\
&& \\
x &=&-\frac{\beta }{\theta }-\frac{1}{\theta }W_{-1}\left( -\beta \exp
\left( -\beta \right) +\frac{\beta \exp \left( -\beta \right) }{\log \alpha }%
\log \left( \alpha +u\left( 1-\alpha \right) \right) \right) \\
&& \\
x &=&-\frac{\beta }{\theta }-\frac{1}{\theta }W_{-1}\left( A\left( \alpha
;\beta ;u\right) \right),
\end{eqnarray*}

\Bin where 

\begin{equation}
\left( A\left( \alpha ,\beta ,u\right) \right) = -\beta \exp
\left( -\beta \right) +\frac{\beta \exp \left( -\beta \right) }{\log \alpha }%
\log \left( \alpha +u\left( 1-\alpha \right) \right).
\end{equation}

\Bin Now we deal with the expression \ \ $A\left( \alpha
;\beta ;u\right) $ below

\begin{eqnarray*}
A\left( \alpha ;\beta ;u\right) &=&-\beta \exp \left( -\beta \right) +\frac{%
\beta \exp \left( -\beta \right) }{\log \alpha }\log \left( \alpha +u\left(
1-\alpha \right) \right) \\
&=&-\beta \exp \left( -\beta \right) +\frac{\beta \exp \left( -\beta \right) 
}{\log \alpha }\left( \log \alpha +\log \left( 1+u\left( \frac{1-\alpha }{%
\alpha }\right) \right) \right) \\
&=&\frac{\beta \exp \left( -\beta \right) }{\log \alpha }\log \left(
1+u\left( \frac{1-\alpha }{\alpha }\right) \right) \\
&=&\frac{\beta \exp \left( -\beta \right) }{\log \alpha }\left[ u\left( 
\frac{1-\alpha }{\alpha }\right) +O\left( u^{2}\right) \right] \\
&=&u\left( \frac{1-\alpha }{\alpha }\right) \left( \frac{\beta \exp \left(
-\beta \right) }{\log \alpha }\right) +O\left( u^{2}\right) \\
&=&uC\left( \alpha ,\beta \right) +O\left( u^{2}\right),
\end{eqnarray*}

\Ni where

\begin{equation*}
C\left( \alpha ,\beta \right) =\frac{\left( 1-\alpha \right) \beta \exp
\left( -\beta \right) }{\alpha \log \alpha }, 
\end{equation*}%

$C\left( \alpha ,\beta \right) $ is a negative \ number for $\alpha \in ] 0, +\infty [\setminus \{1\}$ and $\beta >0.$

\Bin Now, the inverse Lambert function has the following
expansion at the neighborhood of zero $(x=0^{-})$ :

\begin{equation*}
W_{-1}\left( x\right) =\log \left( -x\right) -\log \left( -\log \left(
-x\right) \right) +O \left( \frac{\log \left( -\log \left( -x\right)
\right) }{\log \left( -x\right) }\right)  .
\end{equation*}

\Bin By applying  the $A\left( \alpha ;\beta ;u\right) $ on it, we have

\begin{eqnarray*}
W_{-1}\left( A\left( \alpha ;\beta ;u\right) \right) &=&\log \left( -A\left(
\alpha ;\beta ;u\right) \right) -\log \left( -\log \left( -A\left( \alpha
;\beta ;u\right) \right) \right) \\
&& + O \left( \frac{\log \left( -\log \left( -A\left( \alpha ;\beta
;u\right) \right) \right) }{\log \left( -A\left( \alpha ;\beta ;u\right)
\right) }\right)  . 
\end{eqnarray*}%

\Bin Since the expansion of \ \ $A\left( \alpha ;\beta ;u\right) $ gives \ $%
A\left( \alpha ;\beta ;u\right) =uC\left( \alpha ,\beta \right) +O\left(
u^{2}\right) $ , we have

\begin{equation*}
-A\left( \alpha ;\beta ;u\right) =-uC\left( \alpha ,\beta \right) +O\left(
u^{2}\right). \\
\end{equation*}

\Bin Applying the logarithm both sides, we have

\begin{eqnarray*}
\log \left( -A\left( \alpha ;\beta ;u\right) \right) &=&\log \left[
-uC\left( \alpha ,\beta \right) +O\left( u^{2}\right) \right] \\
&& \\
&=&\log \left[ -uC\left( \alpha ,\beta \right) \left( 1+O\left( u\right)
\right) \right] \\
&& \\
&=&\log \left( u\right) +\log \left( -C\left( \alpha ,\beta \right) \right)
+O\left( u\right) . \\
&\rightarrow &-\infty.
\end{eqnarray*}%

\Bin And, by re-applying the logarithm both sides, we have

\begin{eqnarray*}
-\log \left( -A\left( \alpha ;\beta ;u\right) \right) &=&-\log \left(
u\right) -\log \left( -C\left( \alpha ,\beta \right) \right) +O\left(
u\right) \\
&& \\
-\log \left[ -\log \left( -A\left( \alpha ;\beta ;u\right) \right) \right]
&=&-\log \left[ -\log \left( u\right) -\log \left( -C\left( \alpha ,\beta
\right) \right) +O\left( u\right) \right] \\
&& \\
&=&-\log \left[ -\log \left( u\right) \left( 1+\frac{\log \left( -C\left(
\alpha ,\beta \right) \right) }{\log \left( u\right) }+O\left( \frac{-u}{%
\log \left( u\right) }\right) \right) \right] \\
&& \\
&=&-\log \left( -\log \left( u\right) \right) -\log \left( 1+\frac{\log
\left( -C\left( \alpha ,\beta \right) \right) }{\log \left( u\right) }%
+O\left( \frac{-u}{\log \left( u\right) }\right) \right) \\
&& \\
&=&-\log \left( \log \left( \frac{1}{u}\right) \right) +\frac{\log \left(
-C\left( \alpha ,\beta \right) \right) }{\log \left( \frac{1}{u}\right) }%
+O\left( \frac{u}{\log \left( \frac{1}{u}\right) }\right) . \\
&\rightarrow &-\infty.
\end{eqnarray*}%

\Bin Hence,
 
\begin{equation*}
\frac{\log \left( -\log \left( -A\left( \alpha ;\beta ;u\right) \right)
\right) }{\log \left( -A\left( \alpha ;\beta ;u\right) \right) }=\frac{-\log
\left( \log \left( \frac{1}{u}\right) \right) +\frac{\log \left( -C\left(
\alpha ,\beta \right) \right) }{\log \left( \frac{1}{u}\right) }+O\left( 
\frac{u}{\log \left( \frac{1}{u}\right) }\right) }{\log \left( u\right)
+\log \left( -C\left( \alpha ,\beta \right) \right) +O\left( u\right) }
\end{equation*}

\begin{eqnarray*}
&=&\frac{-\log \left( \log \left( \frac{1}{u}\right) \right) \left[ 1+\frac{%
\log \left( -C\left( \alpha ,\beta \right) \right) }{-\log \left( \log
\left( \frac{1}{u}\right) \right) \log \left( \frac{1}{u}\right) }+O\left( 
\frac{u}{-\log \left( \log \left( \frac{1}{u}\right) \right) \log \left( 
\frac{1}{u}\right) }\right) \right] }{\log \left( u\right) \left[ 1+\frac{%
\log \left( -C\left( \alpha ,\beta \right) \right) }{\log \left( u\right) }%
+O\left( \frac{u}{\log \left( u\right) }\right) \right] } \\
%&& \\
&=&\frac{\log \left( \log \left( \frac{1}{u}\right) \right) }{\log \left( 
\frac{1}{u}\right) }\left[ 1+O\left( \frac{u}{\log \left( u\right) }\right) %
\right],
\end{eqnarray*}

\Bin which tends to $0$ as $u\rightarrow 0.$

\Bin Therefore,

\begin{eqnarray*}
W_{-1}\left( A\left( \alpha ;\beta ;u\right) \right) &=&\log \left( u\right)
-\log \left( \log \left( \frac{1}{u}\right) \right) \\
&&+\log \left( -C\left( \alpha ,\beta \right) \right) +\frac{\log C\left(
\alpha ,\beta \right) }{\log \left( \frac{1}{u}\right) }+O\left( \frac{u}{%
\log \left( \frac{1}{u}\right) }\right). 
\end{eqnarray*}
\Ni Hence,

\begin{eqnarray*}
x &=&-\frac{\beta }{\theta }-\frac{1}{\theta }\left[ W_{-1}\left( A\left(
\alpha ;\beta ;u\right) \right) \right] \\
&=&-\frac{\beta }{\theta }+\frac{1}{\theta }\log \left( \frac{1}{u}\right) +%
\frac{1}{\theta }\log \left( \log \left( \frac{1}{u}\right) \right) -\frac{1%
}{\theta }\log \left( -C\left( \alpha ,\beta \right) \right) \\
&&+\frac{1}{\theta }\times \frac{\log \left( -C\left( \alpha ,\beta \right)
\right) }{\log \left( \frac{1}{u}\right) }+O\left( \frac{u}{\log \left( 
\frac{1}{u}\right) }\right) \\
&=&-\beta \theta ^{-1}-\theta ^{-1}\log \left( -C\left( \alpha ,\beta
\right) \right) +\theta ^{-1}\log \left( \frac{1}{u}\right) +\theta
^{-1}\log \left( \log \left( \frac{1}{u}\right) \right) \\
&&+\frac{\theta ^{-1}\log \left( -C\left( \alpha ,\beta \right) \right) }{%
\log \left( \frac{1}{u}\right) }+\theta ^{-1}K\left( u\right) \\
&=&C_{0}+\theta ^{-1}\log \left( \frac{1}{u}\right) +\theta ^{-1}\log \left(
\log \left( \frac{1}{u}\right) \right) +\frac{\theta ^{-1}\log \left(
-C\left( \alpha ,\beta \right) \right) }{\log \left( \frac{1}{u}\right) }%
+\theta ^{-1}K\left( u\right) .
\end{eqnarray*}

\Ni Where $K\left( u\right) =O\left( \frac{u}{\log \left( \frac{1}{u}%
\right) }\right) $\noindent and \ $C_{0}=-\beta \theta ^{-1}-\theta
^{-1}\log \left( -C\left( \alpha ,\beta \right) \right) .$ 

\Bin Therefore, if $\alpha \in ] 0, +\infty [\setminus \{1\}$, we have

\begin{equation}
G_{\alpha}^{-1}\left( 1-u\right) =C_{0}+\theta ^{-1}\log \left( \frac{1}{u}\right)
+\theta ^{-1}\log \left( \log \left( \frac{1}{u}\right) \right) +\frac{%
\theta ^{-1}\log \left( -C\left( \alpha ,\beta \right) \right) }{\log \left( 
\frac{1}{u}\right) }+\theta ^{-1}K\left( u\right) . 
\end{equation}

%\newpage
\Ni \textbf {Appendix (A2) : Quantile of the \textit{PL-APT} distribution. \label{pageAppendixA2}}  \\

\Ni For $\alpha =1$, the quantile function of the \textit{PL-APTD} is the quantile of Pseudo-Lindley  and it is solution of the equation $F(x)=u $. We have 

\begin{eqnarray*}
1-\beta ^{-1}\left( \beta +\theta x\right) \exp \left( -\theta x\right) &=&u
\\
\beta ^{-1}\left( \beta +\theta x\right) \exp \left( -\theta x\right) &=&1-u
\\
\left( \beta +\theta x\right) \exp \left( -\theta x\right) &=&\beta \left(
1-u\right) \\
\left( \beta +\theta x\right) \exp \left( -\theta x\right) \exp \left(
-\beta \right) &=&\beta \left( 1-u\right) \exp \left( -\beta \right) \\
-\left( \beta +\theta x\right) \exp \left( -\beta -\theta x\right) &=&\beta
\left( u-1\right) \exp \left( -\beta \right) \\
-\beta -\theta x &=&W_{-1}\left( \beta \left( u-1\right) \exp \left( -\beta
\right) \right) \\
-\theta x &=&\beta +W_{-1}\left( \beta \left( u-1\right) \exp \left( -\beta
\right) \right) \\
x &=&-\frac{\beta }{\theta }-\frac{1}{\theta }W_{-1}\left( \beta \left(
u-1\right) \exp \left( -\beta \right) \right).
\end{eqnarray*}

\Bin Thus, the quantile function of Pseudo-Lindley distribution is 

\begin{equation}
x(u)=-\frac{\beta }{\theta }-\frac{1}{\theta }W_{-1}\left( \beta \left(
u-1\right) \exp \left( -\beta \right) \right).
\end {equation}

\Ni For $\alpha \in ] 0, +\infty [\setminus \{1\}$, the quantile function of the \textit{PL-APT}
 distribution is the solution of the equation $G_{\alpha}(x)=u$. we have
 
\begin{eqnarray*}
\frac{1-\alpha ^{1-\beta ^{-1}\left( \beta +\theta x\right) \exp \left(
-\theta x\right) }}{1-\alpha } &=& u \\
1-\alpha ^{1-\beta ^{-1}\left( \beta +\theta x\right) \exp \left( -\theta
x\right) } &=& u \left( 1-\alpha \right) \\
\alpha ^{1-\beta ^{-1}\left( \beta +\theta x\right) \exp \left( -\theta
x\right) } &=&1-u \left( 1-\alpha \right) \\
\left[ 1-\beta ^{-1}\left( \beta +\theta x\right) \exp \left( -\theta
x\right) \right] \log \alpha &=&\log \left[ 1-u \left(
1-\alpha \right) \right] \\
1-\beta ^{-1}\left( \beta +\theta x\right) \exp \left( -\theta x\right)
&=&\left( \log \alpha \right) ^{-1}\log \left[ 1-u \left(
1-\alpha \right) \right] \\
\beta ^{-1}\left( \beta +\theta x\right) \exp \left( -\theta x\right)
&=&1-\left( \log \alpha \right) ^{-1}\log \left[ 1-u \left(
1-\alpha \right) \right] \\
\left( \beta +\theta x\right) \exp \left( -\theta x\right) &=&\beta -\beta
\left( \log \alpha \right) ^{-1}\log \left[ 1-u \left(
1-\alpha \right) \right] \\
\left( \beta +\theta x\right) \exp \left( -\beta -\theta x\right) &=&\beta
\exp \left( -\beta \right) -\beta \exp \left( -\beta \right) \left( \log
\alpha \right) ^{-1}\log \left[ 1-u \left( 1-\alpha \right) %
\right].
\end{eqnarray*}

\Bin We apply the negative Lambert function on both sides to obtain

\begin{eqnarray*}
-\beta -\theta x &=&W_{-1}\left\{ \beta \exp \left( -\beta \right) -\beta
\exp \left( -\beta \right) \left( \log \alpha \right) ^{-1}\log \left[
1-u \left( 1-\alpha \right) \right] \right\} \\
-\theta x &=&\beta +W_{-1}\left\{ \beta \exp \left( -\beta \right) -\beta
\exp \left( -\beta \right) \left( \log \alpha \right) ^{-1}\log \left[
1-u \left( 1-\alpha \right) \right] \right\} \\
x &=&-\frac{\beta }{\theta }-\frac{1}{\theta }W_{-1}\left\{ \beta \exp
\left( -\beta \right) -\beta \exp \left( -\beta \right) \left( \log \alpha
\right) ^{-1}\log \left[ 1-u \left( 1-\alpha \right) \right]
\right\}. \\
&&
\end{eqnarray*}

\Bin Finally , we have

\begin{equation*}
x\left( u\right) =-\frac{\beta }{\theta }-\frac{1}{\theta }W_{-1}\left\{
\beta \exp \left( -\beta \right) -\beta \exp \left( -\beta \right) \left(
\log \alpha \right) ^{-1}\log \left[ 1-u \left( 1-\alpha
\right) \right] \right\}.
\end{equation*}

\end{document}